\newtheorem{theorem}{Theorem}[section]
\newtheorem{lemma}{Lemma}[section]
\newtheorem{proposition}{Proposition}[section]
\theoremstyle{definition}
\newtheorem{definition}{Definition}[section]
\theoremstyle{remark}
\newtheorem{remark}{Remark}[section]
\numberwithin{equation}{section}
\def\be{\begin{equation}}
\def\en{\end{equation}}
\def\bs{\begin{split}}
\def\es{\end{split}}
\def\ba{\begin{align}}
\def\ea{\end{align}}
\author[Z. Qiu]{Zhaoyang Qiu}
\address{School of Mathematics and Statistics, Huazhong University of Science and Technology, Wuhan, 430074, China.}
\email{zhqmath@163.com }
\title[Invariant measure for stochastic CH-NS equations]
{Invariant measure for 2D stochastic Cahn-Hilliard-Navier-Stokes equations}
\keywords{Stochastic Cahn-Hilliard-Navier-Stokes equations, pathwise soluton, invariant measure, bounded weakly continuous}
\subjclass[2010]{35Q35, 76D05, 35R60, 60F10}
\date{\today}
\begin{document}
\begin{abstract}
Using the Maslowski and Seidler method, the existence of invariant measure for 2-dimensional stochastic Cahn-Hilliard-Navier-Stokes equations with multiplicative noise is proved in state space $L_x^2\times H^1$, working with the weak topology. Also, the existence of global pathwise solution is investigated using the stochastic compactness argument.
\end{abstract}

\maketitle
\section{Introduction}

The study of dynamical behaviour of solutions to SPDEs is a major subject, especially in geophysical flow, climate dynamics, gene regulation system, chemical reaction systems. Here we consider the existence of invariant measure to the 2-dimensional stochastic Cahn-Hilliard-Navier-Stokes(CH-NS) equations with multiplicative noise in a smooth bounded domain $\mathcal{D}$,
\begin{eqnarray}\label{Equ1.1}
\left\{\begin{array}{ll}
\!\!du-(\nu_0\triangle u-(u\cdot \nabla)u-\nabla p+\mu\nabla \phi)dt=h(u,\nabla\phi)d\mathcal{W},\\
\!\!d\phi+(u\cdot\nabla)\phi dt-\nu_2\triangle \mu dt=0,\\
\!\!\mu=-\nu_1\triangle \phi+ \kappa f(\phi),\\
\!\!\nabla\cdot u=0,\\
\end{array}\right.
\end{eqnarray}
which describe the motion of two immiscible mixture fluid, consisting of the Navier-Stokes equations governing the fluid velocity and a convective Cahn-Hilliard equation which is responsible for the order parameter standing for the relative concentration of one of the fluids. For further physical background, see \cite{GG,Gurtin,gio, Heida, Blesgen}.  

In equations \eqref{Equ1.1}, $u, p$ and $\phi$ denote the velocity, pressure and the phase parameter, respectively. $\nu_0$ is the viscosity of the fluid, here we assume that $\nu_0\equiv1$. Here $\mu$ stands for the chemical potential of binary mixtures which is the variational derivative of the quantity (free energy functional):
\begin{eqnarray*}
E(\phi)=\int_{\mathcal{D}}\frac{\nu_{1}}{2}|\nabla\phi|^{2}+\kappa F(\phi)dx,
\end{eqnarray*}
where $F(\phi)=\int_{0}^{\phi}f(r)dr$, the two physical parameters $\nu_{1}, \kappa$ describe the interaction between two phases. Especially, the constant $\nu_{1}>0$ is related to the thickness of the interface between the two fluids. $\nu_2>0$ is the mobility constant. In physical background, a representative example of $F$ is logarithmic type, that is,
$$F(s)=c_0[(1+s)\ln (1+s)+(1-s)\ln(1-s)]-c_1s^2, c_1>c_0>0,$$
where $s\in (-1,1)$. Usually, we use a polynomial approximation of the type $F(\phi) = C_1\phi^4-C_2\phi^2$ taking place of the type of logarithmic, where $C_1, C_2$ are two positive constants.

The system \eqref{Equ1.1} is equipped with the following natural no-flux boundary conditions
\begin{align}\label{1.2}
\frac{\partial \phi}{\partial \mathbf{n}}\bigg|_{\partial\mathcal{D}}=\frac{\partial \triangle\phi}{\partial \mathbf{n}}\bigg|_{\partial\mathcal{D}}=0,
\end{align}
where $\mathbf{n}$ is the outward normal on the boundary $\partial\mathcal{D}$. It is easily to check that above conditions imply
\begin{align}\label{1.3}
\frac{\partial \mu}{\partial \mathbf{n}}\bigg|_{\partial\mathcal{D}}=0,
\end{align}
 together with equation $\eqref{Equ1.1}_2$ yields the conservation form of phase parameter $\phi$, that is
$$\langle\phi(t)\rangle:=\frac{1}{|\mathcal{D}|}\int_{\mathcal{D}}\phi(t, x)dx=\langle\phi(0)\rangle,$$
for all $t>0$, where $|\mathcal{D}|$ is the Lebesgue measure of $\mathcal{D}$.
For the boundary condition of $u$, we impose the following Dirichlet boundary condition
\begin{align}\label{1.4}
u|_{\partial\mathcal{D}}=0.
\end{align}
Moreover, the initial data is
\begin{align}\label{1.5}
u(x,0)=u_0(x),~\phi(x,0)=\phi_0(x).
\end{align}

Let $(\Omega,\mathcal{F},\mathbb{P})$ be a complete probability space. $\mathcal{W}$ is a cylindrical Wiener process defined on the Hilbert space $\mathcal{H}$, which is adapted to the complete, right continuous filtration $\{\mathcal{F}_{t}\}_{t\geq 0}$. Namely, $\mathcal{W}=\sum_{k\geq 1}e_k\beta_{k}$ with $\{e_k\}_{k\geq 1}$ being the complete orthonormal basis of $\mathcal{H}$ and $\{\beta_{k}\}_{k\geq 1}$ being a sequence of independent standard one-dimensional Brownian motions. Considering an auxiliary space $\mathcal{H}_0\supset \mathcal{H}$, define by
\begin{eqnarray*}
\mathcal{H}_0=\left\{h=\sum_{k\geq 1}\alpha_k e_k: \sum_{k\geq 1}\alpha_k^2k^{-2}<\infty\right\},
\end{eqnarray*}
 with the norm $\|h\|_{\mathcal{H}_0}^2=\sum_{k\geq 1}\alpha_k^2k^{-2}$. Observe that the mapping $\Phi:\mathcal{H}\rightarrow\mathcal{H}_0$ is Hilbert-Schmidt. We also have that $\mathcal{W}\in C([0,\infty),\mathcal{H}_0)$ almost surely, see \cite{Zabczyk}.

Although the complicated phenomenon of mixture fluid is still far from being well-understood, since its importance in physics and the mathematical challenge, CH-NS equations already received widely attention both in deterministic and stochastic areas, see \cite{Mejdo2, FGG, FGK, FRS, GG, GMA, gio, Medjo, ZW}.

The study of invariant measure could be traced back to the 1930's, see \cite{oxto}, the well-known method of proving the existence of invariant measure was introduced by Krylov-Bogoliubov which was limited to the state space of Markov process lay in the locally compact spaces. Then, the relative theory was extended to case of the infinite dimensional Banach spaces, see \cite{brze3,flan}.  Both of the cases highly require, first the semigroup $\mathbf{P}_t$ is Feller, second, find a Borel probability measure $\xi$ on the state space $X$ and $T_0$ such that the set of measures
\begin{align}\label{1.6}
\mathcal{M}=\left\{\frac{1}{T}\int_{0}^{T}\mathbf{P}_t^*\xi dt; T>T_0\right\}
\end{align}
is tight where $\mathbf{P}_t^*$ is the dual semigroup, see \cite{Zabczyk1}, which requires us to find a auxiliary set compactly embedding into the state space.

First, in the background of SPDEs, the weak solution generally possesses the property of continuous in state space under the weak topology, second, in our case, even if for the phase parameter $\phi$, we have the auxiliary set $H^2\hookrightarrow H^1$ compactly, the high-order nonlinear term $f(\phi)$ and the special constitution cause the difficulty to get the bound for $\int_{0}^{t}\mathbb{E}\|\triangle\phi\|_{L_x^2}^2ds\sim O(t)$ for $t$ large. These two reasons push us to make us of the method developed by Maslowski and Seidler\cite{Mas} extending the Krylov-Bogoliubov theory to work with the weak topology which simplifies the proof of \eqref{1.6}. The method is widely occupied recently, see \cite{brze2} for beam equation and nonlinear wave equation, \cite{brze} for the Navier-Stokes equation in unbounded domain, \cite{bess} for the damped 2D Euler equation.

In order to construct the transition semigroup to 2-dimensional stochastic CH-NS system \eqref{Equ1.1}-\eqref{1.5}, the existence of global pathwise solution as a cornerstone will be shown by a stochastic compactness approach instead of the Galerkin-type approximation convergence method applied by \cite{Medjo, brec}. The specific strategy is, following the Yamada-Watanabe theorem, to verify the existence of martingale solution (weak in probability sense), then the known uniqueness result as well as the Gy\"{o}ngy-Krylov characterization allows us to recover the compactness. Here, we impose the following two conditions on noise intensity operator $h$: there exist two constants $K_0, K_1$ such that

{\bf C.1} $\|h(u,\nabla \phi)\|^2_{L_{2}(\mathcal{H};X)}\leq K_0(1+\|(u, \nabla \phi)\|^2_{X})$,

{\bf C.2} $\|h(u_1,\nabla \phi_1)\|^2_{L_{2}(\mathcal{H};X)}\leq K_1(1+\|(u_1-u_2, \nabla (\phi_1-\phi_2))\|^2_{X})$,\\
where $L_{2}(\mathcal{H},X)$ denotes the collection of Hilbert-Schmidt operators, the set of all linear operators $G$ from $\mathcal{H}$ to the Banach space $X$, with the norm $\|G\|_{L_{2}(\mathcal{H},X)}^2=\sum_{k\geq 1}\|Ge_k\|_{X}^2$.

Required by the tightness of \eqref{1.6} on space $(L_x^2\times H^1)_w$, the quantity
$$\int_{0}^{t}\mathbb{E}\|\nabla\phi\|_{L_x^2}^2ds$$
at most has a linear growth with respect to time $t$. Unlike the coupled SPDEs with the ''symmetric'' construction, here it seems impossible to acquire this quantity directly by taking inner product in equation $\eqref{Equ1.1}_2$. In addition, the high-order nonlinear term $f(\phi)$ also makes the estimate challenging which forces us to consider the most physically specific case $f(\phi)=C_1\phi^3-C_2\phi$, see Remark \ref{rem3.2}. To overcome these difficulties, two extra conditions are imposed on constant $K_0$ and the mobility constant $\nu_2$, that is,

{\bf C.3} the fixed constant $K_0$ in ${\bf C. 1}$ safisfies $CK_0\leq 2$, where $C=C(\mathcal{D})$ is constant,

{\bf C.4} the mobility constant $\nu_2$ is large.\\
For further technique details, see the Step 2 of Lemma \ref{lem5.2}.

The rest of paper goes as follows. We present the weak formula of the system in Section 2. In Section 3 we mainly focus on the global existence and uniqueness of the pathwise solution, also the necessary estimates are given. In section 4, we provide the auxiliary result which is crucial in both establishing the existence of pathwise solution and the sequentially weakly continuous of transition semigroup. The existence of invariant measure is proved in Section 5. An Appendix is also included afterwards to state the results that are frequently used in the paper.

\section{Weak formulation}
 At the beginning, we provide some preliminaries. Throughout the paper, we use the notation 
$$L_t^pX:=L^p(0,T; X),~~ L_\omega^pX:=L^p(\Omega;X),$$ 
$$C^\alpha_t X:=C^\alpha([0,T];X),~~ W_t^{\alpha,p}X:=W^{\alpha, p}(0,T; X)$$
for $\alpha\in (0,1], p\geq 1$, where $X$ is the Banach space and $\Omega$ is the sample space. Here, we use the fractional order Sobolev space with respect to time $t$, since the evolution equations forced by noise only have $\alpha$-order H\"{o}lder continuous in $t$ for $\alpha\in (0,\frac{1}{2})$, see \cite{ANR,FD}.  Denote $X'$ as the dual of Banach space $X$, $\mathbb{E}$ as the mathematical expectation. The notation $\langle \cdot,\cdot\rangle$ defines the dual between the space $X$ and its dual space $X'$.

Let $X_w$ be the Hilbert space $X$ with the weak topology. Introduce
$$C([0,T];X_w):={\rm the~ space ~of}~ X ~{\rm valued ~weakly ~continuous ~function},$$
endowed with the weak topology such that the mapping
$$u\mapsto \langle u,h\rangle$$
is continuous for any $h\in X$, which is a quasi-Polish space, for more details see \cite{brze2}, and
$$(L_t^pX)_w:={\rm the~ space~} L^p(0,T; X)~{\rm with~ the~ weak~ topology}.$$

We say a functional $f: X\rightarrow \mathbb{R}$ is sequentially weakly continuous if $x_n\rightarrow x$ weakly star in $X$, then $f(x_n)\rightarrow f(x)$, where $X$ is the Banach space, and we use $C_b(X_w)$ to stand for the bounded weakly continuous function.

Denote $L_x^p, p\geq 1$ the usual space of $p$-order Lebesgue integrable function on space domain $\mathcal{D}$ and
$$H^{s,p}:=\left\{u\in L_x^p; \frac{\partial^i u}{\partial x}\in L_x^p, i=1,\cdots, s\right\},$$
endowed with the norm
$$\|u\|^p_{H^{s,p}}=\|u\|^p_{L_x^p}+\sum_{i=1}^s\|\partial^i u\|^p_{L_x^p}.$$

Next, introduce the spaces of the fluid velocity $u$, denote
$$\mathcal{\mathbf{V}}=\{u\in C_c^\infty(\mathcal{D}); {\rm div} u=0~ {\rm in}~ \mathcal{D}\}.$$
Define by $\mathrm{L}_x^p$ the closure of $\mathbf{V}$ with respect to the norm of $L_x
^p$, and $\mathrm{H}^s$ the closure of $\mathbf{V}$ with respect to the norm of $H^s$.

The norm $\|(u,v)\|_{L_x^2}:=\|u\|_{\mathrm{L}_x^2}+\|v\|_{L_x^2}$ for $u\in \mathrm{L}_x^2, v\in  L_x^2$.

Define the Stokes operator by
$$A_0u=-P\triangle u,~{\rm for ~all}~u\in D(A_0):=\left\{u\in \mathrm{H}^2\cap \mathrm{L}_x^2\right\}$$
where $P$ is the Helmholtz-Leray projection. By the definition, we could have the operator $A_0$ is positive and self-adjoint on $\mathrm{L}_x^2$, and $A_0^{-1}$ is compact. Therefore, there exists a sequence $\{e_k\}_{k\geq 1}$ being an orthonormal basis for $\mathrm{L}_x^2$ of the Stokes operator $A_0$.

Also introduce the operator $A_1\phi=-\triangle \phi$ for any
$$\phi\in D(A_1)=\left\{\phi\in H^2(\mathcal{D}):\partial_n\phi|_{\partial\mathcal{D}}=0\right\}.$$
The Poincar\'{e} inequality implies the $H^2$ norm is equivalent to the norm $\|A_1(\cdot)\|_{L^2_x}+|\langle\cdot\rangle|$.

To define the variational setting, we next define $(b_{0}, b_{1}, b_{2}): D(A_0)\times D(A_0)\rightarrow \mathrm{L}^2_x, ~L_x^2\times D(A_1)\rightarrow \mathrm{L}_x^2, ~D(A_0)\times D(A_1)\rightarrow L_x^2$ as the bilinear operators such that
\begin{eqnarray*}
&&\langle b_{0}(u,v),w\rangle=\int_{\mathcal{D}}(u\cdot \nabla v)\cdot wdx=B_{0}(u,v,w),\\
&&\langle b_{1}(\mu,\phi),w\rangle=\int_{\mathcal{D}}\mu(\nabla \phi\cdot w) dx=B_{1}(\mu,\phi, w),\\
&&\langle b_{2}(u,\phi),\rho\rangle=\int_{\mathcal{D}}(u\cdot \nabla \phi)\cdot \rho dx=B_{2}(u,\phi,\rho).
\end{eqnarray*}
Note that,
$$b_0(u,u)=Pu\cdot \nabla u, ~b_1(\mu, \phi)=P\mu\nabla \phi$$
and for $u\in \mathrm{L}_x^2, v\in \mathrm{H}^1, \phi\in H^1$
\begin{align}\label{2.1*}
B_{0}(u,v,v)=0, ~B_{2}(u,\phi,\phi)=0.
\end{align}
With all these in hand, we could rewrite system \eqref{Equ1.1} into the following form
\begin{align}\label{E2.1}
\left\{\begin{array}{ll}
\!\!du+A_0udt+b_0(u,u)dt-b_1(\nu_1A_1\phi, \phi)dt=h(u, \nabla\phi)d\mathcal{W},\\
\!\!d\phi+\nu_2A_1\mu dt+b_2(u,\phi)dt=0,\\
\!\!\mu=\nu_1A_1\phi+\kappa f(\phi),
\end{array}\right.
\end{align}
understanding on space $(\mathrm{H}^1)'\times (H^1)'$ (the weak sense).
\begin{remark}Since $\nabla F(\phi)=f(\phi)\nabla \phi$, so $\mu\nabla \phi=-\nu_1\triangle \phi\cdot \nabla \phi+\kappa\nabla F(\phi)$. The term $\nabla F(\phi)$ could be absorbed into the pressure. Hence, we could replace $b_1(\mu,\phi)$ by $b_1(\nu_1A_1\phi, \phi)$.
\end{remark}

In order to handle the stochastic term, the following Burkholder-Davis-Gundy inequality is essential, for an $X$-valued predictable process $\varpi\in L^{2}(\Omega;L^{2}_{loc}([0,\infty),L_{2}(\mathcal{H},X)))$  by taking $\varpi_{k}=\varpi e_{k}$, it holds
\begin{eqnarray*}
\mathbb{E}\left[\sup_{t\in [0,T]}\left\|\int_{0}^{t}\varpi d\mathcal{W}\right\|_{X}^{p}\right]\leq c_{p}\mathbb{E}\left(\int_{0}^{T}\|\varpi\|_{L_{2}(\mathcal{H},X)}^{2}dt\right)^{\frac{p}{2}}
=c_{p}\mathbb{E}\left(\int_{0}^{T}\sum_{k\geq 1}\|\varpi_k\|_{X}^{2}dt\right)^{\frac{p}{2}},
\end{eqnarray*}
for any $p\geq1$.

\section{Existence, uniqueness and the necessary estimates}

In this section, we give the existence, uniqueness and the necessary estimates of the solution to system \eqref{Equ1.1}-\eqref{1.5}. To simplify the notation, we denote
$$\mathbb{H}:=\mathrm{L}_x^2\times H^1, \widetilde{\mathbb{H}}:=\mathrm{H}^1\times H^2.$$
\begin{definition}\label{def2.1*} (Martingale solution) Let $\Lambda$ be a Borel probability measure on $\mathbb{H}$ with $\int_{\mathbb{H}}|x|^p d\Lambda\leq C$ for a constant $C$. $(\Omega, \mathcal{F}, \{\mathcal{F}_t\}_{t\geq 0}, \mathbb{P}, \mathcal{W}, u,\phi)$ is a martingale solution to the system (\ref{Equ1.1})-(\ref{1.4}) with the initial data $\Lambda$ if the following conditions hold:

1. $(\Omega, \mathcal{F}, \{\mathcal{F}_{t}\}_{t\geq 0}, \mathbb{P})$ is a stochastic basis and  $\{\mathcal{F}_{t}\}_{t\geq 0}$ is complete right-continuous filtration, $\mathcal{W}$ is a cylindrical Wiener process,

2. $(u,\phi)$ is the $\mathcal{F}_{t}$ progressive measurable processes with
\begin{align*}
u\in L_\omega^p(L^\infty_t\mathrm{L}_x^2\cap L^2_t\mathrm{H}^1), ~\phi\in L_\omega^p(L^\infty_tH^1\cap L^2_tH^2),
\end{align*}

3. $\Lambda=\mathbb{P}\circ (u_0,\phi_0)^{-1}$,

4. for any $v\in \mathrm{H}^1, \bar{v}\in H^1$, $t\in [0,T]$, $(u,\phi)$ satisfies $\mathbb{P}${\rm-a.s.}
\begin{align}
\left\{\begin{array}{ll}
\!\!(u(t),v)+\int_{0}^{t}\langle A_0 u,v\rangle+B_0(u,u,v)-B_1(\nu_1A_1\phi, \phi, v)ds\nonumber\\ \qquad\qquad\qquad\qquad=(u_0, v)+\int_{0}^{t}(h(u, \nabla\phi),v)d\mathcal{W},\\
\!\!(\phi(t), \bar{v})+\int_{0}^{t} \nu_2\langle A_1\mu, \bar{v}\rangle+ B_2(u,\phi,\bar{v})ds=(\phi_0, \bar{v}),
\end{array}\right.
\end{align}
with
\begin{align*}
\mu=\nu_1A_1\phi+\kappa f(\phi),~\mathbb{P}\mbox{-a.s.}~ {\rm in} ~\mathcal{D}\times (0,T).
\end{align*}
\end{definition}

\begin{definition}\label{def2.1} (Pathwise solution) Let $(\Omega, \mathcal{F}, \mathbb{P})$ be the fixed probability space. $(u,\phi)$ is a global pathwise solution to the system (\ref{Equ1.1})-(\ref{1.5}) if the following conditions hold:

1. $(u,\phi)$ is $\mathcal{F}_t$ progressive measurable processes such that
\begin{align*}
u\in L_\omega^p(L^\infty_t\mathrm{L}_x^2\cap L^2_t\mathrm{H}^1), ~\phi\in L_\omega^p(L^\infty_tH^1\cap L^2_tH^2),
\end{align*}

2. for any $v\in \mathrm{H}^1, \bar{v}\in H^1$, $t\in [0,T]$, $(u,\phi)$ satisfies $\mathbb{P}${\rm-a.s.}
\begin{align}
\left\{\begin{array}{ll}
\!\!(u(t),v)+\int_{0}^{t}\langle A_0 u,v\rangle+B_0(u,u,v)-B_1(\nu_1A_1\phi, \phi, v)ds\nonumber\\ \qquad\qquad\qquad\qquad=(u_0, v)+\int_{0}^{t}(h(u, \nabla\phi),v)d\mathcal{W},\\
\!\!(\phi(t), \bar{v})+\int_{0}^{t} \nu_2\langle A_1 \mu, \bar{v}\rangle+ B_2(u,\phi,\bar{v})ds=(\phi_0, \bar{v}),
\end{array}\right.
\end{align}
with
\begin{align*}
\mu=\nu_1A_1\phi+\kappa f(\phi),~\mathbb{P}\mbox{-a.s.}~ {\rm in} ~\mathcal{D}\times (0,T).
\end{align*}
\end{definition}

\begin{definition}\label{def2.2} (Uniqueness) If $(u_1,\phi_1),(u_2,\phi_2)$ are two solutions of system (\ref{Equ1.1})-(\ref{1.5}). Suppose that the initial data $(u_1(0),\phi_1(0))$ and $(u_2(0),\phi_2(0))$ satisfy
$$(u_1(0),\phi_1(0))=(u_2(0),\phi_2(0)),$$
then the pathwise uniqueness holds if
\begin{eqnarray*}
\mathbb{P}\Big\{(u_{1}(t,x), \phi_1(t,x))=(u_{2}(t,x),\phi_2(t,x));~\forall t\in[0,T]\Big\}=1.
\end{eqnarray*}
\end{definition}

\begin{theorem}\label{thm3.1} Assume that the initial data $(u_0,\phi_0)$ satisfies $\|(u_0,\phi_0)\|_{\mathbb{H}}\leq C_3$ for constant $C_3>0$, the operator $h$ satisfies the Conditions {\bf C.1} and {\bf C.2}. Then, there exists a unique global pathwise solution to system (\ref{Equ1.1})-(\ref{1.5}) in the sense of Definitions \ref{def2.1},\ref{def2.2}.
\end{theorem}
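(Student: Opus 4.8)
The plan is to follow the route announced in the introduction: produce a martingale solution by a Galerkin approximation together with a stochastic compactness argument, prove pathwise uniqueness, and then use the Yamada--Watanabe theorem in the Gy\"{o}ngy--Krylov form to upgrade the martingale solution to a pathwise one. First I would set up the Galerkin scheme, projecting the system \eqref{E2.1} onto the finite-dimensional spaces spanned by the first $n$ eigenfunctions of the Stokes operator $A_0$ (for the velocity) and of $A_1$ (for the phase parameter). This gives a finite system of It\^{o} SDEs with locally Lipschitz coefficients, so local-in-time solvability is classical, and the global existence of the approximations $(u^n,\phi^n)$ will follow once the uniform-in-$n$ a priori bounds below are in hand.

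The core estimate comes from testing the velocity equation with $u^n$ and the phase equation with the chemical potential $\mu^n=\nu_1A_1\phi^n+\kappa f(\phi^n)$. By \eqref{2.1*} the transport term $b_0(u^n,u^n)$ drops out; the coupling term $\langle b_1(\nu_1A_1\phi^n,\phi^n),u^n\rangle$ cancels exactly against the $\nu_1A_1\phi^n$ part of $\langle b_2(u^n,\phi^n),\mu^n\rangle$, while the remaining part $\kappa\langle b_2(u^n,\phi^n),f(\phi^n)\rangle=\kappa\int_{\mathcal{D}}u^n\cdot\nabla F(\phi^n)\,dx$ vanishes because $\nabla\cdot u^n=0$ and $u^n|_{\partial\mathcal{D}}=0$. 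Since the $\phi$-equation carries no noise, the chain rule for $E(\phi^n)$ has no It\^{o} correction and one obtains
\[
d\left(\tfrac12\|u^n\|_{\mathrm{L}_x^2}^2+E(\phi^n)\right)+\left(\|\nabla u^n\|_{L_x^2}^2+\nu_2\|\nabla\mu^n\|_{L_x^2}^2\right)dt=\tfrac12\|h(u^n,\nabla\phi^n)\|_{L_2(\mathcal{H},\mathrm{L}_x^2)}^2\,dt+(h(u^n,\nabla\phi^n)\,d\mathcal{W},u^n).
\]
Taking suprema in time, applying the Burkholder--Davis--Gundy inequality, Condition {\bf C.1} and Gronwall, and using that $F(\phi)=\tfrac{C_1}{4}\phi^4-\tfrac{C_2}{2}\phi^2$ is bounded below together with the conservation of $\langle\phi\rangle$, I would obtain the uniform bounds $u^n\in L_\omega^p(L^\infty_t\mathrm{L}_x^2\cap L^2_t\mathrm{H}^1)$, $\phi^n\in L_\omega^p L^\infty_t H^1$ and $\nabla\mu^n\in L_\omega^2 L^2_t L_x^2$. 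Converting this last bound into the $L^2_tH^2$ control of $\phi^n$ is the delicate step: one recovers $\|A_1\phi^n\|_{L_x^2}$ from $\mu^n$ by elliptic regularity after estimating $f(\phi^n)$ in $L_x^2$, which in two dimensions is achieved through the Gagliardo--Nirenberg inequality and the $L^\infty_tH^1$ bound on $\phi^n$.

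Equipped with these moment bounds and with fractional-in-time regularity estimates for the deterministic and stochastic integrals, the laws of $(u^n,\phi^n,\mathcal{W})$ are tight on a path space combining $C([0,T];\mathbb{H}_w)$, $(L^2_t\widetilde{\mathbb{H}})_w$ and the strong space $L^2_t\mathrm{L}_x^2\times L^2_tH^1$, the latter supplied by the compact embeddings $\mathrm{H}^1\hookrightarrow\hookrightarrow\mathrm{L}_x^2$ and $H^2\hookrightarrow\hookrightarrow H^1$ via an Aubin--Lions--Simon argument. Prokhorov's theorem and the Jakubowski--Skorokhod representation theorem for quasi-Polish spaces then yield a new stochastic basis carrying a.s.\ convergent copies $(\tilde u^n,\tilde\phi^n,\tilde{\mathcal{W}}^n)$. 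The strong $L^2_t$ convergence lets me identify the limits of $b_0,b_1,b_2$ and, crucially, of the cubic term $f(\tilde\phi^n)\to f(\tilde\phi)$, where the explicit form $f(\phi)=C_1\phi^3-C_2\phi$ and the interpolation of the strong $L^2_tH^1$ convergence with the $L^\infty_tH^1$ bound are used to control $\int|\tilde\phi^n|^3$. A standard martingale-representation argument identifies the stochastic integral, producing a martingale solution in the sense of Definition \ref{def2.1*}.

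For pathwise uniqueness I would take two solutions $(u_1,\phi_1)$ and $(u_2,\phi_2)$ with identical initial data, write the equations for the differences $U=u_1-u_2$ and $\Phi=\phi_1-\phi_2$, and test them against $U$ and against $\nu_1A_1\Phi+\kappa(f(\phi_1)-f(\phi_2))$ respectively. After the same structural cancellations, the residual transport and coupling terms are estimated in two dimensions by Ladyzhenskaya's inequality $\|v\|_{L^4_x}^2\le C\|v\|_{L^2_x}\|\nabla v\|_{L^2_x}$; the cubic difference produces a factor $\phi_1^2+\phi_1\phi_2+\phi_2^2$ absorbed using the $L^\infty_tH^1\hookrightarrow L^\infty_tL^q_x$ bounds, while Condition {\bf C.2} controls the noise difference. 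A stochastic Gronwall argument then gives $\mathbb{E}\sup_{t\in[0,T]}(\|U\|_{\mathrm{L}_x^2}^2+\|\Phi\|_{H^1}^2)=0$, which is Definition \ref{def2.2}. Finally, having both a martingale solution and pathwise uniqueness, the Gy\"{o}ngy--Krylov characterization applied to the joint laws of any two subsequences of the Galerkin approximations forces these laws to concentrate on the diagonal, so $(u^n,\phi^n)$ converges in probability on the original probability space to a limit that is the sought global pathwise solution in the sense of Definition \ref{def2.1}. I expect the main obstacle to be twofold: extracting the $L^2_tH^2$ bound on the phase field from the $\nabla\mu$ dissipation while taming the cubic $f(\phi)$, and carrying this same cubic nonlinearity through both the compactness passage to the limit and the uniqueness estimate without breaking the critical $2$D balance.
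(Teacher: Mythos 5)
Your proposal follows essentially the same route as the paper's proof: Galerkin approximation, the basic energy identity obtained by testing with $u^n$ and the chemical potential $\mu^n$ (with the same structural cancellations and the $H^2$ bound for $\phi^n$ recovered from the $\nabla\mu^n$ dissipation), tightness via Aubin--Lions plus fractional-in-time regularity, the Skorokhod--Jakubowski representation on a quasi-Polish path space to get a martingale solution, and the Gy\"{o}ngy--Krylov argument combined with pathwise uniqueness to recover the global pathwise solution. The only cosmetic deviations are that the paper cites \cite{Medjo} for uniqueness rather than proving it (your sketched test function would be more conveniently replaced by testing with $\Phi$ or $A_1\Phi$, since $(\partial_t\Phi, f(\phi_1)-f(\phi_2))$ is not an exact derivative, but this is routine) and that it identifies the limit of the stochastic integrals via the convergence result of Lemma \ref{lem4.6} rather than a martingale-representation argument; both choices are standard and your version is sound.
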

\begin{remark} The proof of Theorem \ref{thm3.1} was given in pioneering work \cite{Medjo}, for deterministic case see \cite{GG}. To identify the limit, the author implemented the Galerkin-type approximation convergence in mean square introduced by \cite{brec}. Here, unlike \cite{Medjo}, we use the stochastic compactness argument to establish the existence of pathwise solution.
\end{remark}

\begin{proof}  For the proof of existence, the crucial technique details is the same with the result built in Section 4, therefore, here we only give a simplify proof .

Existence. \underline{Step 1}. For the existence of finite-dimensional Galerkin approximate solutions, we refer the readers to \cite{Medjo}, and also the sequence of approximate solutions $(u_n, \phi_n)$ satisfies the following uniform a priori estimates in $n$ for all $p\geq 2$
\begin{align}\label{3.1*}
u_n\in L_\omega^p(L^\infty_t\mathrm{L}_x^2\cap L^2_t\mathrm{H}^1), ~\phi_n\in L_\omega^p(L^\infty_tH^1\cap L^2_tH^2), ~\mu_n\in L_\omega^p(L^2_tH^1).
\end{align}

\begin{remark}\label{rem3.2} The proof of the a priori estimates \eqref{3.1*} is actually simpler than \eqref{5.13*}. Here, we do not have to handle the term $(F(\phi), 1)$, since it has the bound from below. Moreover, we could use the stochastic Gronwall lemma to absorb the term $\mathbb{E}\int_{0}^{t}\|\nabla \phi\|_{L^2_x}^2ds$ arising from noise term, getting the upper bound with the exponential growth of $t$ which is not suitable for proving the tightness of \eqref{1.6} on the space $\mathbb{H}_w$.
\end{remark}

To order to obtain the tightness of the set of probability measures induced by the law of $(u_n, \phi_n)$, we need the following necessary estimate:

there exists constant $C$ such that for any $\alpha\in (0,\frac{1}{2}), \tilde{\alpha}\in (0,1]$
\begin{align}\label{3.1}
\mathbb{E}\|u_n\|^2_{C_t^\alpha(\mathrm{H}^{-1,\frac{3}{2}})}+\mathbb{E}\|\phi_n\|^2_{C_t^{\tilde{\alpha}}(H^{-1})}\leq C,
\end{align}
where $C$ is independent of $n$.

To simplify the notation, we use $(u,\phi)$ replacing $(u_n, \phi_n)$. Note that, for a.s. $\omega$, and for any $\epsilon>0$, there exists $t_1,t_2\in [0,T]$ such that
$$\sup_{t_0\neq t'_0}\frac{\left\|\int_{t_0}^{t'_0}Pu\cdot \nabla uds\right\|_{\mathrm{H}^{-1,\frac{3}{2}}}}{|t'_0-t_0|^\alpha}\leq \frac{\left\|\int_{t_1}^{t_2}Pu\cdot \nabla uds\right\|_{\mathrm{H}^{-1,\frac{3}{2}}}}{|t_2-t_1|^\alpha}+\epsilon.$$
Taking $t=\frac{3}{2}, s=2, r=2$ in Lemma \ref{lem6.1} and using H\"{o}lder's inequality, for $\alpha\leq 1$,
\begin{align}\label{3.2}
\mathbb{E}\sup_{t,t'\in [0,T]}\frac{\left\|\int_{t'}^{t}Pu\cdot \nabla uds\right\|_{\mathrm{H}^{-1,\frac{3}{2}}}}{|t-t'|^\alpha}&\leq \mathbb{E}\frac{\left\|\int_{t_1}^{t_2}Pu\cdot \nabla uds\right\|_{\mathrm{H}^{-1,\frac{3}{2}}}}{|t_2-t_1|^\alpha}+\epsilon\nonumber\\
&\leq \mathbb{E}\frac{\int_{t_1}^{t_2}\|Pu\cdot \nabla u\|_{\mathrm{H}^{-1,\frac{3}{2}}}ds}{|t_2-t_1|^\alpha}+\epsilon\nonumber\\
&\leq \mathbb{E}\frac{\int_{t_1}^{t_2}\|u\|_{\mathrm{H}^1} \|\nabla u\|_{\mathrm{H}^{-1}}ds}{|t_2-t_1|^\alpha}+\epsilon\nonumber\\
&\leq \frac{\left(\mathbb{E}\sup_{t\in [0,T]}\|u\|_{\mathrm{L}_x^2}^2\right)^\frac{1}{2}\left(\mathbb{E}\left[\int_{t_1}^{t_2}\|\nabla u\|_{\mathrm{H}^{-1}}ds\right]^2\right)^\frac{1}{2}}{|t_2-t_1|^\alpha}+\epsilon\nonumber\\
&\leq |t_2-t_1|^{1-\alpha}\left(\mathbb{E}\sup_{t\in [0,T]}\|u\|_{\mathrm{L}_x^2}^2\right)^\frac{1}{2}\left(\mathbb{E}\int_{t_1}^{t_2}\| u\|^2_{\mathrm{H}^{1}}ds\right)^\frac{1}{2}+\epsilon\nonumber\\
&\leq C.
\end{align}
Similarly, one can estimate
\begin{align}
\mathbb{E}\sup_{t,t'\in [0,T]}\frac{\left\|\int_{t'}^{t}P(A_1\phi\cdot \nabla \phi )ds\right\|_{\mathrm{H}^{-1,\frac{3}{2}}}}{|t-t'|^\alpha}&\leq \mathbb{E}\frac{\left\|\int_{t_1}^{t_2}A_1\phi\cdot \nabla \phi ds\right\|_{H^{-1,\frac{3}{2}}}}{|t_2-t_1|^\alpha}+\epsilon\nonumber\\
&\leq \mathbb{E}\frac{\int_{t_1}^{t_2}\|A_1\phi\|_{H^{-1}} \|\nabla \phi\|_{H^{1}}ds}{|t_2-t_1|^\alpha}+\epsilon\nonumber\\
&\leq |t_2-t_1|^{1-\alpha}\!\!\left(\!\mathbb{E}\sup_{t\in [0,T]}\|\nabla \phi\|_{L_x^2}^2\!\!\right)^\frac{1}{2}\!\!\left(\!\mathbb{E}\int_{t_1}^{t_2}\|\nabla \phi\|^2_{H^{1}}ds\!\!\right)^\frac{1}{2}+\epsilon\nonumber\\
&\leq C,
\end{align}
also, taking $t=1, s=3, r=2$ in Lemma \ref{lem6.1} and using the embedding $L_x^2\hookrightarrow H^{-1,3}$,
\begin{align}
\mathbb{E}\sup_{t,t'\in [0,T]}\frac{\left\|\int_{t'}^{t}u\cdot \nabla \phi ds\right\|_{H^{-1}}}{|t-t'|^\alpha}&\leq \mathbb{E}\frac{\left\|\int_{t_1}^{t_2}u\cdot \nabla \phi ds\right\|_{H^{-1}}}{|t_2-t_1|^\alpha}+\epsilon\nonumber\\
&\leq \mathbb{E}\frac{\int_{t_1}^{t_2}\|u\|_{\mathrm{H}^{-1,3}} \|\nabla \phi\|_{H^{1}}ds}{|t_2-t_1|^\alpha}+\epsilon\nonumber\\
&\leq \mathbb{E}\frac{\int_{t_1}^{t_2}\|u\|_{\mathrm{L}_x^2} \|\nabla \phi\|_{H^{1}}ds}{|t_2-t_1|^\alpha}+\epsilon\nonumber\\
&\leq |t_2-t_1|^{1-\alpha}\left(\mathbb{E}\sup_{t\in [0,T]}\|u\|_{\mathrm{L}_x^2}^2\right)^\frac{1}{2}\!\!\left(\mathbb{E}\int_{t_1}^{t_2}\|\nabla \phi\|^2_{H^{1}}ds\right)^\frac{1}{2}+\epsilon\nonumber\\
&\leq C.
\end{align}
By the Burkholder-Davis-Gundy inequality, the Condition {\bf C.1} and \eqref{3.1*}, for $v\in \mathrm{H}^1$
\begin{align}\label{3.5}
\mathbb{E}\left(\sup_{t_0\neq t'_0}\frac{\left|\int_{t_0}^{t'_0}(h(u,\nabla\phi), v)d\mathcal{W}\right|}{|t'_0-t_0|^\alpha}\right)^p
\leq &~\mathbb{E}\frac{\left(\int_{t_1}^{t_2}\|h(u,\nabla\phi)\|_{L_2(\mathcal{H};L_x^2)}^2 \|v\|_{\mathrm{L}_x^2}^2
dt\right)^\frac{p}{2}}{|t_2-t_1|^{\alpha p}}+\epsilon^p\nonumber\\
\leq &~\frac{CK_0^\frac{p}{2}(t_2-t_1)^\frac{p}{2}\mathbb{E}(1+\|(u,\nabla\phi)\|_{L_t^\infty L_x^2})^p}{|t_1-t_2|^{\alpha p}}+\epsilon^p\nonumber\\
\leq &~CK_0^\frac{p}{2}|t_2-t_1|^{\left(\frac{1}{2}-\alpha\right)p}+\epsilon^p\leq C,
\end{align}
for any $\alpha<\frac{1}{2}$. For the linear term, by \eqref{3.1*} we easily get
\begin{align}\label{3.7}
 \mathbb{E}\left\|\int_{0}^{t}A_0 u ds\right\|^2_{C_t^1(\mathrm{H}^{-1})}\leq C,~  \mathbb{E}\left\|\int_{0}^{t}A_1 \mu ds\right\|^2_{C_t^1(H^{-1})}\leq C.
\end{align}
Taking into account of estimates (\ref{3.2})-(\ref{3.7}), we obtain the desired bound (\ref{3.1}).

\underline{Step 2}. Next, we could use the bounds (\ref{3.1*})(\ref{3.1}) to establish the tightness of the probability measure set $\mathcal{L}^n$ induced by the law of $(u_n,\phi_n)$, the further details of the proof is same with the argument as Lemma \ref{lem4.1} which is implemented for the solutions $(u_l, \phi_l)$ corresponding to initial data $(u_{0,l}, \phi_{0,l})$, then, we could identify the limit using the technique details we present in Step 2 of the proof of Proposition \ref{pro4.1}, establishing the existence of global martingale solution which is weak in both probability sense and PDE sense in the sense of Definition \ref{def2.1*}.

\underline{Step 3}. To obtain the existence of global pathwise solution which is strong in probability sense, we shall recover the compactness of approximate sequences $(u_n,\phi_n)$ on the original space $(\Omega,\mathcal{F},\mathbb{P})$ applying the Gy\"{o}ngy-Krylov characterization, for the proof see Appendix. Then, passing the limit, the global pathwise solution follows.
\end{proof}

\section{Continuous dependence of initial data}

Let $(u_{0,l},\phi_{0,l})$ be a sequence of initial data with
\begin{align}\label{4.1*}
\|(u_{0,l},\phi_{0,l})\|_{\mathbb{H}}\leq C_3 ~{\rm for ~all}~ l\in \mathbb{N}^+,
\end{align}
according to the Theorem \ref{thm3.1}, there exists a sequence global pathwise solutions $(u_l, \phi_l)$ to system (\ref{Equ1.1})-(\ref{1.4}) corresponding to initial data $(u_{0,l}, \phi_{0,l})$ and the fixed stochastic basis $(\Omega,\mathcal{F},\{\mathcal{F}_{t}\}_{t\geq0},\mathbb{P}, \mathcal{W})$, meanwhile, these solutions still enjoy the following uniform bounds
\begin{align}\label{4.1}
\mathbb{E}\left(\|u_l\|^2_{C_t^\alpha(\mathrm{H}^{-1,\frac{3}{2}})}+\|\phi_l\|^2_{C_t^{\tilde{\alpha}}(H^{-1})}\right)\leq C
\end{align}
and
\begin{align}\label{4.2}
\mathbb{E}\left(\|(u_l,\phi_l)\|^p_{L_t^\infty\mathbb{H}}+\|(u_l,\phi_l)\|^p_{L^2_t\widetilde{\mathbb{H}}}+\|\mu_l\|_{L_t^2H^1}^p\right)\leq C,
\end{align}
for any $p\geq 2, \alpha\in (0,\frac{1}{2}), \tilde{\alpha}\in (0,1]$, where constant $C$ is independent of $l$.

This section is mainly to show the following continuous dependence result:
\begin{proposition}\label{pro4.1} Assume that initial data $(u_{0,l}, \phi_{0,l})$ is an $\mathbb{H}$-valued sequence which convegences weakly to $(u_{0},\phi_{0})\in \mathbb{H}$, and $\eqref{4.1*}$ holds for certain constant $C_3$. Then, there exist a new probability space
$(\widetilde{\Omega}, \widetilde{\mathcal{F}}, \widetilde{\mathbb{P}})$, a new subsequence $(\tilde{u}_{l_k},\tilde{\phi}_{l_k}, \tilde{\mu}_{l_k}, \widetilde{\mathcal{W}}_{l_k})$ and the processes $(\tilde{u},\tilde{\phi}, \tilde{\mu}, \widetilde{\mathcal{W}})$, such that
\begin{align}
(\tilde{u}_{l_k},\tilde{\phi}_{l_k}, \tilde{\mu}_{l_k}, \widetilde{\mathcal{W}}_{l_k}) ~&{\rm and} ~(u_{l_k},\phi_{l_k}, \mu_{l_k}, \mathcal{W}) ~{\rm have ~ the ~same ~joint~ distribution~ in} ~\mathcal{X},\label{4.3*}\\
(\tilde{u},\tilde{\phi}) ~&{\rm and} ~(u,\phi) ~{\rm have ~ the ~same ~joint~ distribution~ in} ~\mathcal{X}\label{4.4*}
\end{align}
and
\begin{align}\label{4.5*}
(\tilde{u}_{l_k},\tilde{\phi}_{l_k},\tilde{\mu}_{l_k}, \widetilde{\mathcal{W}}_{l_k})\rightarrow (\tilde{u},\tilde{\phi},\tilde{\mu}, \widetilde{\mathcal{W}}), ~{\rm in~ the~ topology ~of~ \mathcal{X}}, ~\widetilde{\mathbb{P}}\mbox{-a.s.}
\end{align}
where the $(u,\phi)$ is the pathwise solution with the initial data $(u_0,\phi_0)$. Moreover, we have for all $p\geq 1$
\begin{align}\label{4.5}
\widetilde{\mathbb{E}}\left(\sup_{t\in [0,T]}\|(\tilde{u}_{l_k},\tilde{\phi}_{l_k})\|_{\mathbb{H}}^{2p}\right)+
\widetilde{\mathbb{E}}\left(\int_{0}^{T}\|(\tilde{u}_{l_k},\tilde{\phi}_{l_k})\|^2_{\widetilde{\mathbb{H}}}+\|\tilde{\mu}_{l_k}\|_{H^1}^2dt\right)^p\leq C.
\end{align}
In addition, $(\widetilde{\Omega}, \widetilde{\mathcal{F}}, \widetilde{\mathbb{P}}, \tilde{u},\tilde{\phi}, \widetilde{\mathcal{W}})$ is a martingale solution of the system \eqref{Equ1.1} in the sense of Definition \ref{def2.1*}.
\end{proposition}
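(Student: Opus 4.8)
The plan is to run the by-now-classical stochastic compactness scheme—tightness, Skorokhod representation, and identification of the limiting equation—while treating carefully the one non-standard feature, namely that the initial data converge only weakly in $\mathbb{H}$.

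\emph{Step 1 (Tightness).} First I would fix the path space $\mathcal{X}$ as a product of the spaces in which the components naturally live, equipped partly with weak topologies: for $u$, the intersection $C([0,T];(\mathrm{L}_x^2)_w)\cap (L_t^2\mathrm{H}^1)_w\cap L_t^2\mathrm{L}_x^2$; for $\phi$, $C([0,T];(H^1)_w)\cap (L_t^2H^2)_w\cap L_t^2H^1$; for $\mu$, $(L_t^2H^1)_w$; and $C([0,T];\mathcal{H}_0)$ for the driving noise. The uniform bounds \eqref{4.1}--\eqref{4.2}, together with the compact embeddings $\mathrm{H}^1\hookrightarrow\hookrightarrow\mathrm{L}_x^2\hookrightarrow\mathrm{H}^{-1,\frac{3}{2}}$ and $H^2\hookrightarrow\hookrightarrow H^1\hookrightarrow H^{-1}$ and an Aubin--Lions--Simon argument, give relative compactness of $(u_{l},\phi_{l})$ in the strong factors $L_t^2\mathrm{L}_x^2$ and $L_t^2H^1$, while the bounds in $L_t^\infty\mathbb{H}$ and $L_t^2\widetilde{\mathbb{H}}$ give tightness in the weak factors (bounded balls being weakly/weak-$*$ sequentially compact and metrizable there). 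This yields tightness of the laws of $(u_{l},\phi_{l},\mu_{l},\mathcal{W})$ on $\mathcal{X}$.

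\emph{Step 2 (Skorokhod and transfer of bounds).} Since $\mathcal{X}$ is quasi-Polish, Prokhorov's theorem together with the Jakubowski--Skorokhod representation theorem (see \cite{brze2}) furnish a new probability space $(\widetilde{\Omega},\widetilde{\mathcal{F}},\widetilde{\mathbb{P}})$, a subsequence and limit variables with the equality of laws \eqref{4.3*} and the $\widetilde{\mathbb{P}}$-a.s. convergence \eqref{4.5*}. The estimate \eqref{4.5} then follows from \eqref{4.3*}, the original bound \eqref{4.2}, and lower semicontinuity of the norms under the weak/a.s. convergence via Fatou's lemma.

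\emph{Step 3 (Identification of the limiting equation).} Here I would pass to the limit in the weak formulation of Definition \ref{def2.1*} tested against fixed $v\in\mathrm{H}^1,\ \bar v\in H^1$. The linear terms $\langle A_0u,v\rangle$ and $\langle A_1\mu,\bar v\rangle$ pass by the weak convergence of $u_{l_k}$ in $L_t^2\mathrm{H}^1$ and of $\mu_{l_k}$ in $L_t^2H^1$. The strong convergences $u_{l_k}\to\tilde u$ in $L_t^2\mathrm{L}_x^2$ and $\nabla\phi_{l_k}\to\nabla\tilde\phi$ in $L_t^2L_x^2$ handle the convective terms $B_0(u,u,v)$ and $B_2(u,\phi,\bar v)$; the pairing of the weak limit $A_1\phi_{l_k}\rightharpoonup A_1\tilde\phi$ in $L_t^2L_x^2$ with the same strong limit of $\nabla\phi_{l_k}$ handles $B_1(\nu_1A_1\phi,\phi,v)$; and the strong $L_t^2H^1$ convergence of $\phi_{l_k}$, combined with the two-dimensional embeddings $H^1\hookrightarrow L_x^q$ and the uniform $L_t^\infty H^1$ bound, upgrades to strong convergence of the cubic $f(\phi_{l_k})=C_1\phi_{l_k}^3-C_2\phi_{l_k}$, so that $\tilde\mu=\nu_1A_1\tilde\phi+\kappa f(\tilde\phi)$. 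I expect the stochastic term to be the main obstacle: I would show that the functional $M^{l_k}(t):=(u_{l_k}(t),v)-(u_{0,l_k},v)+\int_0^t[\langle A_0u_{l_k},v\rangle+B_0-B_1]\,ds$ is a square-integrable martingale whose quadratic variation is prescribed by $h$, then pass to the limit in the associated martingale identities using \eqref{4.5} for uniform integrability, and thereby identify $\tilde M(t)=\int_0^t(h(\tilde u,\nabla\tilde\phi),v)\,d\widetilde{\mathcal{W}}$; Condition \textbf{C.1} controls the stochastic integrand and Condition \textbf{C.2} supplies the continuity of $h$ needed to move the limit inside $h$.

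\emph{Step 4 (Initial data and identification of the law).} The weak convergence $(u_{0,l_k},\phi_{0,l_k})\rightharpoonup(u_0,\phi_0)$ in $\mathbb{H}$ enters only through the scalar terms $(u_{0,l_k},v)\to(u_0,v)$ and $(\phi_{0,l_k},\bar v)\to(\phi_0,\bar v)$, which is exactly what the weak formulation requires; this is precisely why the result tolerates merely weak convergence of the data. Hence $(\widetilde{\Omega},\widetilde{\mathcal{F}},\widetilde{\mathbb{P}},\tilde u,\tilde\phi,\widetilde{\mathcal{W}})$ is a martingale solution with initial data $(u_0,\phi_0)$ in the sense of Definition \ref{def2.1*}. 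Finally, the pathwise uniqueness of Theorem \ref{thm3.1} (Definition \ref{def2.2}) forces every martingale solution with these data to carry the law of the pathwise solution $(u,\phi)$, which yields \eqref{4.4*} and completes the proof.
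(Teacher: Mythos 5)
Your proposal is correct in substance and follows the paper's overall scheme: tightness via Aubin--Lions on the strong factors and Banach--Alaoglu on the weak ones (Lemma \ref{lem4.1}), the Jakubowski--Skorokhod representation (Theorem \ref{thm4.2}) on the quasi-Polish product space to obtain \eqref{4.3*} and \eqref{4.5*}, transfer of the moment bounds \eqref{4.2} by equality of laws to get \eqref{4.5}, termwise identification of the limit equation, identification of $\tilde\mu=\nu_1A_1\tilde\phi+\kappa f(\tilde\phi)$ from the strong convergence of $\tilde\phi_{l_k}$ in $L_t^2H^1$, weak convergence of the deterministic initial data entering only through the pairings $(u_{0,l_k},v)$, $(\phi_{0,l_k},\bar v)$, and finally pathwise uniqueness (hence uniqueness in law) to obtain \eqref{4.4*}.

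You diverge from the paper at two points, both worth recording. First, for the stochastic term you propose the martingale-identification method: show that $M^{l_k}(t)$ is a square-integrable martingale with quadratic variation prescribed by $h$, pass to the limit in the martingale identities using \eqref{4.5} for uniform integrability, and identify $\tilde M$ through its quadratic and cross variations with $\widetilde{\mathcal W}$. The paper argues more directly: Condition {\bf C.2} together with the a.s.\ strong convergences gives $h(\tilde u_{l_k},\nabla\tilde\phi_{l_k})\to h(\tilde u,\nabla\tilde\phi)$ a.e., the Vitali theorem (Theorem \ref{thm4.1}) upgrades this to convergence in probability in $L_t^2L_2(\mathcal{H};L_x^2)$, and Lemma \ref{lem4.6} then passes the limit inside the stochastic integral in one stroke, after which Vitali applied to the functionals $\mathbb{F}_1,\mathbb{F}_2$ fixes the identity for every $t$. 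Your route is heavier (martingale property for the limit filtration, a representation step) but more robust, since it would survive without a Lipschitz-type hypothesis on $h$; the paper's route is shorter precisely because {\bf C.2} and Lemma \ref{lem4.6} are available. Second, you equip the $\mu$-component only with $(L_t^2H^1)_w$, whereas the paper's $\mathcal X_\mu=L_t^2L_x^2\cap(L_t^2H^1)_w$ carries a strong factor, whose tightness requires the nontrivial extra estimate $\mathbb{E}\|\mu_l\|_{W^{\frac12,2}(0,T;H^{-3})}\leq C$ obtained from a fractional-in-time bound on $f(\phi)$ (Claim 3 of Lemma \ref{lem4.1}). Your weaker topology does suffice downstream, because $\tilde\mu_{l_k}$ enters the limit equation only through the linear term $\langle A_1\tilde\mu_{l_k},\bar v\rangle$ and $\tilde\mu$ is identified from $\tilde\phi$ via the constitutive relation; but note that \eqref{4.5*} as stated refers to the paper's $\mathcal X$, so if you keep the strong $L_t^2L_x^2$ factor in $\mathcal X_\mu$ you must supply that estimate. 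One last small point: \eqref{4.5} concerns the relabelled sequence $(\tilde u_{l_k},\tilde\phi_{l_k},\tilde\mu_{l_k})$, so equality of joint laws with \eqref{4.2} already yields it; the Fatou/lower-semicontinuity step you invoke is only needed for bounds on the limit variables, which \eqref{4.5} does not assert.
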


\begin{remark} For the Galerkin approximate solutions $(u_n,\phi_n)$ relative to the finite dimensional initial data $(u_{0,n},\phi_{0,n})$ and the fixed stochastic basis $(\Omega, \mathcal{F}, \mathbb{P}, \mathcal{W})$, we also have the same results as \eqref{4.3*}, \eqref{4.5*}: there exist a new probability space
$(\widehat{\Omega}, \widehat{\mathcal{F}}, \widehat{\mathbb{P}})$, a new subsequence $(\hat{u}_{n_k},\hat{\phi}_{n_k}, \hat{\mu}_{n_k}, \widehat{\mathcal{W}}_{n_k})$ and the processes $(\hat{u},\hat{\phi}, \hat{\mu}, \widehat{\mathcal{W}})$, such that
\begin{align*}
(\hat{u}_{n_k},\hat{\phi}_{n_k}, \hat{\mu}_{n_k}, \widehat{\mathcal{W}}_{n_k}) ~&{\rm and} ~(u_{n_k},\phi_{n_k}, \mu_{n_k}, \mathcal{W}) ~{\rm have ~ the ~same ~joint~ distribution~ in} ~\mathcal{X},
\end{align*}
and
\begin{align*}
(\hat{u}_{n_k},\hat{\phi}_{n_k},\hat{\mu}_{n_k}, \widehat{\mathcal{W}}_{n_k})\rightarrow (\hat{u},\hat{\phi},\hat{\mu}, \widehat{\mathcal{W}}), ~{\rm in~ the~ topology ~of~ \mathcal{X}}, ~\widetilde{\mathbb{P}}\mbox{-a.s.}.
\end{align*}
\end{remark}

The key point of showing above continuous dependence result is to acquire the tightness of the probability measures induced by the law of $(u_l, \phi_l)$. Define the set
$$\mathcal{X}=\mathcal{X}_{u,\phi}\times \mathcal{X}_{\mu}\times \mathcal{X}_\mathcal{W},$$
where
$$\mathcal{X}_{u,\phi}:=C([0,T]; \mathbb{H}_w)\cap L_t^2\mathbb{H}\cap (L_t^2\widetilde{\mathbb{H}})_w,~~ \mathcal{X}_{\mu}:=L_t^2L_x^2\cap(L^2_tH^1)_w , ~~  \mathcal{X}_\mathcal{W}:=C_t (\mathcal{H}_0)$$
and $\mathbb{H}_w$ denotes the spaces $\mathrm{L}_x^2,H^1$ with the weak topology of $\mathrm{L}_x^2,H^1$ respectively. Let $\Gamma$ be the smallest topology on $\mathcal{X}$ such that four natural embedding from $\mathcal{X}$ are continuous.

Define the measure set
$$\mathcal{L}^l(B)=\mathbb{P}\{(u_l,\phi_l,\mu_l, \mathcal{W})\in B\},$$
for $B\in \mathcal{B}(\mathcal{X})$ which is the smallest $\sigma$-algebra containing the family $\Gamma$.
\begin{lemma}\label{lem4.1}
The measure set $\mathcal{L}^l$ is tight on space $(\mathcal{X}, \Gamma)$.
\end{lemma}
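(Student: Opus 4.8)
The plan is to reduce the tightness of $\mathcal{L}^l$ on the product space to the tightness of each of the three marginals, since a family of laws on a product of (quasi-)Polish spaces is tight as soon as each marginal family is tight on the corresponding factor. On $\mathcal{X}_\mathcal{W}=C_t(\mathcal{H}_0)$ this is immediate: the law of $\mathcal{W}$ is independent of $l$, and a single Radon measure on a Polish space is tight. For the remaining two marginals I will, for each $\varepsilon>0$, exhibit a set that is relatively compact in the relevant topology and carries mass at least $1-\varepsilon$ uniformly in $l$; Chebyshev's inequality applied to the uniform moment bounds \eqref{4.1}--\eqref{4.2} then selects the radius of that set.

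For the $(u_l,\phi_l)$ marginal on $\mathcal{X}_{u,\phi}=C([0,T];\mathbb{H}_w)\cap L^2_t\mathbb{H}\cap(L^2_t\widetilde{\mathbb{H}})_w$, a set is $\Gamma$-relatively compact exactly when it is relatively compact in each of the three topologies, so I treat them one at a time. \emph{(i)} On $C([0,T];\mathbb{H}_w)$ I use the weak Arzel\`a--Ascoli criterion (as in \cite{brze2}): bounded subsets of $L^\infty_t\mathbb{H}$ take values in a fixed weakly compact ball of $\mathbb{H}$, while the H\"older bound \eqref{4.1} in the weaker spaces $\mathrm{H}^{-1,3/2}\times H^{-1}$, together with the continuous embedding $\mathbb{H}\hookrightarrow\mathrm{H}^{-1,3/2}\times H^{-1}$, yields equicontinuity in the weak topology; hence the sets $\{\|(u,\phi)\|_{L^\infty_t\mathbb{H}}\le R,\ \|u\|_{C^\alpha_t(\mathrm{H}^{-1,3/2})}+\|\phi\|_{C^{\tilde\alpha}_t(H^{-1})}\le R\}$ are relatively compact there. \emph{(ii)} On the strong space $L^2_t\mathbb{H}$ I invoke the Aubin--Lions--Simon lemma along the triple $\widetilde{\mathbb{H}}\hookrightarrow\hookrightarrow\mathbb{H}\hookrightarrow\mathrm{H}^{-1,3/2}\times H^{-1}$, where the first embedding is compact by Rellich, so that bounded subsets of $L^2_t\widetilde{\mathbb{H}}\cap C^\alpha_t(\mathrm{H}^{-1,3/2}\times H^{-1})$ are relatively compact in $L^2_t\mathbb{H}$. \emph{(iii)} On $(L^2_t\widetilde{\mathbb{H}})_w$ the balls of the reflexive space $L^2_t\widetilde{\mathbb{H}}$ are weakly compact, so the bound \eqref{4.2} suffices by itself. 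In each case Chebyshev applied to \eqref{4.1}--\eqref{4.2} fixes $R=R(\varepsilon)$ uniformly in $l$.

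For the $\mu_l$ marginal on $\mathcal{X}_\mu=L^2_tL^2_x\cap(L^2_tH^1)_w$, the weak part is again immediate, the bound on $\|\mu_l\|_{L^2_tH^1}$ in \eqref{4.2} giving weak compactness in the reflexive space $L^2_tH^1$. The strong part, relative compactness in $L^2_tL^2_x$, is where the real work lies, since $\mu$ is not governed by a stochastic equation of its own and thus carries no time regularity a priori; in particular the constitutive relation $\mu=\nu_1A_1\phi+\kappa f(\phi)$ cannot be used directly, because $A_1\phi$ is only weakly compact in $L^2_tL^2_x$. My plan is to read the time regularity of $\mu$ off the phase equation $\eqref{E2.1}_2$, written as $\nu_2A_1\mu=-\partial_t\phi-b_2(u,\phi)$: integrating in time and using the H\"older bound \eqref{4.1} on $\phi$ together with the $L^2_t$-control of $b_2(u,\phi)$ from Lemma \ref{lem6.1} shows that the time primitive $\int_0^\cdot A_1\mu_l\,ds$ is H\"older continuous into a negative-order space, uniformly in $l$. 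I then split $\mu_l=\langle\mu_l\rangle+(\mu_l-\langle\mu_l\rangle)$: the spatial mean reduces to $\langle\mu_l\rangle=\kappa\langle f(\phi_l)\rangle$ because $\langle A_1\phi_l\rangle=0$ by the no-flux condition \eqref{1.3}, and is controlled through the bounds on $\phi_l$; the mean-zero part is recovered by inverting the isomorphism $A_1$ on zero-average functions, after which an Aubin--Lions--Simon argument along $H^1\hookrightarrow\hookrightarrow L^2_x\hookrightarrow H^{-1}$ delivers relative compactness of $\{\mu_l\}$ in $L^2_tL^2_x$. Assembling \emph{(i)}--\emph{(iii)}, the weak $\mu$-estimate, and this last step, and taking products of the resulting compact sets, shows that $\mathcal{L}^l$ is tight on $(\mathcal{X},\Gamma)$. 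I expect the main obstacle to be precisely this transfer of time regularity to $\mu$ and the attendant fractional-in-time compactness estimate: unlike $u$ and $\phi$, the potential $\mu$ has no independent temporal control, so establishing the strong $L^2_tL^2_x$ compactness---and handling the kernel of $A_1$ cleanly---is the delicate point of the proof.
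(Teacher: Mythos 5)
Your treatment of the $(u_l,\phi_l)$ and $\mathcal{W}$ marginals coincides with the paper's: reduction of joint tightness to tightness of the marginals, Chebyshev applied to \eqref{4.1}--\eqref{4.2}, the Aubin--Lions embeddings for the strong and weakly-continuous topologies, and Banach--Alaoglu for the weak ones. The genuine gap is in the $\mu_l$ marginal, exactly at the point you yourself flagged as delicate. H\"older continuity of the time primitive $\int_0^\cdot A_1\mu_l\,ds$ --- which is all that the identity $\nu_2 A_1\mu_l=-\partial_t\phi_l-b_2(u_l,\phi_l)$ can deliver, and which is in substance a restatement of the H\"older bound on $\phi_l$ already contained in \eqref{4.1} --- gives no control whatsoever on the temporal oscillation of $\mu_l$ itself, and therefore cannot feed an Aubin--Lions--Simon argument. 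Those criteria require fractional-in-time regularity of the function (equivalently, uniform smallness of its time translates), not of its antiderivative: the sequence $g\sin(nt)$, $g\in H^1$ fixed, is bounded in $L^\infty_t H^1$ and its primitives tend to zero uniformly with vanishing Lipschitz constants, yet it has no subsequence converging strongly in $L^2_tL^2_x$. Inverting $A_1$ on the mean-zero part does not repair this: $A_1^{-1}\partial_t\phi_l=\partial_t(A_1^{-1}\phi_l)$, and a modulus of continuity for this quantity would require second-order time information on $\phi_l$, which is circular since $\partial_t^2\phi_l$ reintroduces $\partial_t\mu_l$. So the step ``an Aubin--Lions--Simon argument along $H^1\hookrightarrow\hookrightarrow L^2_x\hookrightarrow H^{-1}$ delivers relative compactness of $\{\mu_l\}$ in $L^2_tL^2_x$'' has no hypothesis on time translates of $\mu_l$ to run on, and as stated it fails.

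The missing idea is the one you dismissed: the constitutive relation $\mu_l=\nu_1A_1\phi_l+\kappa f(\phi_l)$ is used in the paper not for spatial compactness (your objection that $A_1\phi_l$ is only weakly compact in $L^2_tL^2_x$ is correct but beside the point) but to transfer \emph{temporal} regularity from $\phi_l$ to $\mu_l$: since $\mu_l$ is slaved pointwise in time to $\phi_l$, the bound $\phi_l\in C_t^{\tilde\alpha}(H^{-1})$ from \eqref{4.1} passes to $A_1\phi_l$ in $C_t^{\tilde\alpha}(H^{-3})$ trivially, while for the cubic term the paper factorizes the difference and applies Lemma \ref{lem6.1} to get $\|f(\phi(t_1))-f(\phi(t_2))\|_{H^{-1,\frac{3}{2}}}\leq C\|\phi(t_1)-\phi(t_2)\|_{H^{-1}}\|\phi^2(t_1)+\phi^2(t_2)\|_{H^1}$; combined with \eqref{4.1}--\eqref{4.2} and H\"older's inequality this yields $\mathbb{E}\|\mu_l\|_{W^{\frac{1}{2},2}(0,T;H^{-3})}\leq C$. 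The third embedding of Lemma \ref{lem4.5}, namely the compactness of $L^2_tH^1\cap W_t^{\frac{1}{2},2}H^{-3}\hookrightarrow L^2_tL^2_x$, then gives the strong compactness, and Chebyshev the tightness. Note also that once this is in place your splitting $\mu_l=\langle\mu_l\rangle+(\mu_l-\langle\mu_l\rangle)$ becomes unnecessary; and even within your scheme, time-compactness of the scalar mean $\langle\mu_l\rangle=\kappa\langle f(\phi_l)\rangle$ would require precisely the same difference estimate for $f(\phi_l)$, so the constitutive law cannot be avoided.
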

\begin{proof} It is enough to show that every measure set
\begin{align}
&\mathcal{L}^l_u(B_1):=\mathbb{P}\{u_l\in B_1\}, ~\mathcal{L}^l_\phi(B_2):=\mathbb{P}\{\phi_l\in B_2\},\nonumber\\
& \mathcal{L}_\mathcal{W}(B_3):=\mathbb{P}\{\mathcal{W}\in B_3\},  \mathcal{L}_\mu^l(B_4):=\mathbb{P}\{\mu_l\in B_4\},\nonumber
\end{align}
is tight on $\mathcal{X}_u, \mathcal{X}_\phi, \mathcal{X}_\mu, \mathcal{X}_\mathcal{W} $ respectively, for $B_1\in \mathcal{B}(\mathcal{X}_u), B_2\in \mathcal{B}(\mathcal{X}_\phi), B_3\in \mathcal{B}(\mathcal{X}_\mathcal{W}), B_4\in \mathcal{B}(\mathcal{X}_\mu)$, where $\mathcal{X}_u:=C([0,T]; (\mathrm{L}_x^2)_w)\cap L_t^2\mathrm{L}_x^2\cap (L^2_t\mathrm{H}^1)_w$,~~ $\mathcal{X}_\phi:=C([0,T]; H^1_w)\cap L^2_tH^{1}\cap (L^2_tH^2)_w$.

{\bf Claim} 1. The probability measure set $\mathcal{L}^l_{u}$ is tight on $\mathcal{X}_u$.

Define the set
\begin{align*}
B^u_{1,K}&=\left\{u_{l}\in L_t^2\mathrm{H}^1\cap C_t^\alpha(\mathrm{H}^{-1,\frac{3}{2}}):\|u_{l}\|_{ L_t^2\mathrm{H}^1}+\|u_{l}\|_{C_t^\alpha(\mathrm{H}^{-1,\frac{3}{2}})}\leq K\right\},\\
B^u_{2,K}&=\left\{u_{l}\in L_t^\infty \mathrm{L}_x^2\cap C_t^\alpha(\mathrm{H}^{-1,\frac{3}{2}}): \|u_{l}\|_{L_t^\infty \mathrm{L}_x^2}+\|u_{l}\|_{C_t^\alpha(\mathrm{H}^{-1,\frac{3}{2}})}\leq K\right\}.
\end{align*}
The Aubin-Lions compact embedding, see Lemma \ref{lem4.5}
$$L_t^2H^1\cap C_t^\alpha(H^{-1,\frac{3}{2}})\hookrightarrow L^2_tL^2_x,~L^\infty_tL_x^2\cap C_t^\alpha(H^{-1,\frac{3}{2}})\hookrightarrow C([0,T];(L_x^2)_w)$$
implies that the set $B^u_{1,K}, B^u_{2,K}$ is relatively compact in $L^2_t\mathrm{L}^2_x, C([0,T];(L_x^2)_w)$. Considering (\ref{4.1}), (\ref{4.2}), by the Chebyshev inequality we have
\begin{align*}
\mathcal{L}_u^{l}((B^u_{1,K}\cap B^u_{2,K})^c)&\leq \mathbb{P}\left(\|u_{l}\|_{ L_t^2\mathrm{H}^1}>\frac{K}{2}\right)+\mathbb{P}\left(\|u_{l}\|_{ {C_t^\alpha(\mathrm{H}^{-1,\frac{3}{2}})}}>\frac{K}{2}\right)\nonumber\\
&\quad+\mathbb{P}\left(\|u_{l}\|_{ L^\infty_t\mathrm{L}_x^2}>\frac{K}{2}\right)+\mathbb{P}\left(\|u_{l}\|_{ {C_t^\alpha(\mathrm{H}^{-1,\frac{3}{2}})}}>\frac{K}{2}\right)\nonumber\\
&\leq \frac{C}{K}\mathbb{E}\left(\|u_{l}\|_{ L_t^2\mathrm{H}^1}+\|u_{l}\|_{ L^\infty_t\mathrm{L}_x^2}+\|u_{l}\|_{C_t^\alpha(\mathrm{H}^{-1,\frac{3}{2}})}\right)\leq \frac{C}{K}.
\end{align*}
Note that, by the Alaoglu-Banach theorem, the set
\begin{eqnarray*}
B^u_{3,K}:=\left\{u_{l}\in L_t^{2} \mathrm{H}^1: \|u_{l}\|_{L_t^{2} \mathrm{H}^1}\leq K\right\}
\end{eqnarray*}
is relatively compact on path space $(L_t^{2} \mathrm{H}^1)_w$. The bound (\ref{4.2}) and the Chebyshev inequality again, yield
$$\mathcal{L}^l_{u}( (B_{3,K}^u)^c)\leq \frac{C}{K}.$$
We obtain the tightness of set $\mathcal{L}^l_{u}$.

{\bf Claim} 2. The probability measure set $\mathcal{L}^l_{\phi}$ is tight on $\mathcal{X}_\phi$.

In a similar way to the {\bf Claim} 1, define the set
\begin{align*}
B^\phi_{1,K}&=\left\{\phi_{l}\in L_t^2H^2\cap C_t^\alpha(H^{-1}):\|\phi_{l}\|_{ L_t^2H^1}+\|\phi_{l}\|_{C_t^\alpha(H^{-1})}\leq K\right\},\\
B^\phi_{2,K}&=\left\{\phi_{l}\in L_t^\infty H^1\cap C_t^\alpha(H^{-1}): \|\phi_{l}\|_{L_t^\infty H^1}+\|\phi_{l}\|_{C_t^\alpha(H^{-1})}\leq K\right\},\\
B^\phi_{3,K}&=\left\{\phi_{l}\in L_t^{2} H^2: \|\phi_{l}\|_{L_t^{2} H^2}\leq K\right\}.
\end{align*}
Applying the Aubin-Lions compact embedding
$$L_t^2H^2\cap C_t^{\alpha}H^{-1}\hookrightarrow L_t^2H^{1}, L_t^\infty H^1\cap C_t^\alpha(H^{-1})\hookrightarrow C([0,T]; H^1_w),$$
we infer that $B^\phi_{1,K}, B^\phi_{2,K}$ is relatively compact in $L_t^2H^{1}, C([0,T]; H^1_w)$ respectively. Also, by the Alaoglu-Banach theorem, the set $B^\phi_{3,K}$ is relatively compact in $(L_t^2H^{2})_w$.

By bounds \eqref{4.1}, \eqref{4.2}, we deduce that the probability measure set $\mathcal{L}^l_{\phi}$ is tight. 

{\bf Claim} 3. The probability measure set $\mathcal{L}^l_{\mu}$ is tight on $\mathcal{X}_\mu$.

Since 
\begin{align*}
&\mathbb{E}\left(\int_{0}^{T}\int_{0}^{T}\frac{\|f(\phi(t_1))-f(\phi(t_2))\|^p_{H^{-1,\frac{3}{2}}}}{|t_1-t_2|^{1+\alpha p}}dt_1 dt_2\right)^\frac{1}{p}\nonumber\\
&\leq C\mathbb{E}\left(\int_{0}^{T}\int_{0}^{T}\frac{\|\phi(t_1)-\phi(t_2)\|^p_{H^{-1}}\|\phi^2(t_1)+\phi^2(t_2)\|_{H^1}^p}{|t_1-t_2|^{1+\alpha p}}dt_1 dt_2\right)^\frac{1}{p}\nonumber\\
&\leq C\mathbb{E}\Bigg(\sup_{t_1,t_2\in [0,T]}\frac{\|\phi(t_1)-\phi(t_2)\|_{H^{-1}}}{|t_1-t_2|^{\frac{1}{p}+\alpha }}\nonumber\\
&\qquad\quad\times\int_{0}^{T}\int_{0}^{T}\|(\phi(t_1), \phi(t_2))\|^p_{L_x^6}\|\nabla \phi(t_1), \nabla\phi(t_2)\|_{L_x^3}^pdt_1dt_2^\frac{1}{p}\Bigg)\nonumber\\
&\leq C\left[\mathbb{E}\left(\sup_{t_1,t_2\in [0,T]}\frac{\|\phi(t_1)-\phi(t_2)\|_{H^{-1}}}{|t_1-t_2|^{\frac{1}{p}+\alpha }}\right)^2\right]^\frac{1}{2}\nonumber\\
&\quad\times\left[\mathbb{E}\left(\int_{0}^{T}\int_{0}^{T}\|(\phi(t_1), \phi(t_2))\|^p_{H^1}\|(\nabla \phi(t_1), \nabla\phi(t_2))\|_{H^2}^pdt_1dt_2\right)^\frac{2}{p}\right]^\frac{1}{2}
\end{align*}
taking $p=2, \alpha=\frac{1}{2}$, by bounds \eqref{4.1}, \eqref{4.2}, the H\"{o}lder inequality, we have
$$\mathbb{E}\|\mu_l\|_{W^{\frac{1}{2}, 2}(0,T; H^{-3})}\leq C,$$
together with bound \eqref{4.2}, using the same argument as above, we could infer that $\mathcal{L}_\mu^l$ is tight on $\mathcal{X}_\mu$.

Since the sequence $\mathcal{W}$ is only one element and thus, the set $\mathcal{L}_\mathcal{W}$ is weakly compact. Lemma \ref{lem4.1} follows.
\end{proof}

\begin{proof}[Proof of Proposition \ref{pro4.1}] \underline{Step 1}. With the tightness of probability measure set $\mathcal{L}^l$ in hand, from the Skorokhod-Jakubowski representation theorem (Theorem \ref{thm4.2}), we obtain \eqref{4.3*} directly, and
\begin{align*}
(\tilde{u}_{l_k},\tilde{\phi}_{l_k},\tilde{\mu}_{l_k}, \widetilde{\mathcal{W}}_{l_k})\rightarrow (\tilde{u},\tilde{\phi},\tilde{\mu}, \widetilde{\mathcal{W}}), ~{\rm in~ the~ topology ~of~ \mathcal{X}}, ~\widetilde{\mathbb{P}}\mbox{-a.s.}
\end{align*}
In particular, the chemical potential $\mu$ is a nonlinear term, we need to verify that $\tilde{\mu}=-\nu_1\triangle \tilde{\phi}+\kappa f(\widetilde{\phi})$, a.e. on $(0,T)\times \mathcal{D}$. Note that, by the mean-value theorem, there exists $\theta\in (0,1)$ such that
\begin{align*}
\|f(\tilde{\phi}_{l_k})-f(\widetilde{\phi})\|_{L_t^2 L_x^2}&\leq C\left\||\theta \tilde{\phi}+(1-\theta)\tilde{\phi}_{l_k}|^2(\tilde{\phi}_{l_k}-\tilde{\phi})\right\|_{L_t^2 L_x^2}\nonumber\\
&\leq C\left\|\||\theta \tilde{\phi}+(1-\theta)\tilde{\phi}_{l_k}|^2\|_{L_x^3}\|\tilde{\phi}_{l_k}-\tilde{\phi}\|_{L_x^6}\right\|_{L_t^2}\nonumber\\
&\leq C\|(\tilde{\phi}_{l_k}, \tilde{\phi})\|_{L_t^\infty H^1}^2\|\tilde{\phi}_{l_k}-\tilde{\phi}\|_{L_t^2H^1}.
\end{align*}
The result follows from the fact that $\|\tilde{\phi}_{l_k}-\tilde{\phi}\|_{L_t^2H^1}\rightarrow 0,~ \widetilde{\mathbb{P}}\mbox{-a.s.}$ as $k\rightarrow\infty$. Since $(\tilde{u}_{l_k},\tilde{\phi}_{l_k}, \tilde{\mu}_{l_k})$  and $(u_{l_k},\phi_{l_k}, \mu_{l_k})$ have the same joint distribution, we obtain \eqref{4.5} from \eqref{4.2}.

\underline{Step 2}. We show that $(\tilde{u},\tilde{\phi},\tilde{\mu}, \widetilde{\mathcal{W}})$ also satisfies the system \eqref{E2.1}. Again, since the sequences $(\tilde{u}_{l_k},\tilde{\phi}_{l_k}, \tilde{\mu}_{l_k}, \widetilde{\mathcal{W}}_{l_k})$  and $(u_{l_k},\phi_{l_k}, \mu_{l_k}, \mathcal{W})$ have the same joint distribution, we have $(\tilde{u}_{l_k},\tilde{\phi}_{l_k}, \tilde{\mu}_{l_k}, \widetilde{\mathcal{W}}_{l_k})$ satisfies the system (\ref{E2.1}), the proof is standard, see \cite{ww}. Next, we identify the limit by passing $k\rightarrow \infty$.

For any $v\in \mathrm{H}^2$, decomposing
$$B_0(\tilde{u}_{l_k},  \tilde{u}_{l_k}, v)-B_0(\tilde{u}, \tilde{u}, v)=B_0( \tilde{u}, \tilde{u}_{l_k}-\tilde{u}, v)+B_0( \tilde{u}_{l_k}-\tilde{u}, \tilde{u}_{l_k}, v).$$
By the convergence $\tilde{u}_{l_k}\rightarrow \tilde{u}$ in $L_t^2\mathrm{L}_x^2, ~\widetilde{\mathbb{P}}\mbox{-a.s.}$ and Lemma \ref{lem6.1}, we have $\widetilde{\mathbb{P}} \mbox{-a.s.}$
$$\int_{0}^{t}|B_0( \tilde{u}_{l_k}-\tilde{u}, \tilde{u}_{l_k}, v)|ds\leq \|v\|_{\mathrm{H}^2}\|\tilde{u}_{l_k}-\tilde{u}\|_{L_t^2\mathrm{L}_x^2}\| \tilde{u}_{l_k}\|_{L_t^2\mathrm{H}^1}\rightarrow 0.$$
Furthermore, using the bound $\tilde{u}\in L_t^2\mathrm{H}^1$ and the convergence $\tilde{u}_{l_k}\rightarrow \tilde{u}$ in $(L_t^2\mathrm{H}^1)_w, \widetilde{\mathbb{P}}\mbox{-a.s.}$ we infer that
$$\int_{0}^{t}B_0( \tilde{u}, \tilde{u}_{l_k}-\tilde{u}, v)ds\rightarrow 0.$$
By Lemma \ref{lem6.1}, the H\"{o}lder inequality and the bound (\ref{4.5}),
\begin{align}\label{4.8}
\widetilde{\mathbb{E}}\left(\int_{0}^{T}B_0(\tilde{u}_{l_k}, \tilde{u}_{l_k}, v)dt\right)^p&\leq \widetilde{\mathbb{E}}\left(\int_{0}^{T}\|\tilde{u}_{l_k}\|_{\mathrm{L}_x^2}\| \tilde{u}_{l_k}\|_{\mathrm{H}^1} \|v\|_{\mathrm{H}^{1,3}}dt\right)^p\nonumber\\
&\leq \|v\|^p_{\mathrm{H}^2}\widetilde{\mathbb{E}}\left[\sup_{t\in [0,T]}\|\tilde{u}_{l_k}\|_{\mathrm{L}_x^2}^p\left(\int_{0}^{T}\| \tilde{u}_{l_k}\|_{\mathrm{H}^1}dt\right)^p\right]\nonumber\\
&\leq CT^\frac{p}{2}\|v\|^p_{\mathrm{H}^2}\widetilde{\mathbb{E}}\left(\sup_{t\in [0,T]}\|\tilde{u}_{l_k}\|_{\mathrm{L}_x^2}^{2p}\right)\widetilde{\mathbb{E}}\left(\int_{0}^{T}\| \tilde{u}_{l_k}\|^2_{\mathrm{H}^1}dt\right)^p\nonumber\\
&\leq C.
\end{align}

Since
$$A_1\phi\cdot \nabla \phi=\nabla \left(\frac{|\nabla\phi|^2}{2}\right)-{\rm div} (\nabla \phi \otimes \nabla\phi),$$
then, we have
\begin{align*}
&\quad B_1(\nu_1A_1\tilde{\phi}_{l_k},\tilde{ \phi}_{l_k}, v)-B_1(\nu_1A_1\tilde{\phi}, \tilde{\phi}, v)\nonumber\\&=\int_{\mathcal{D}}\nu_1(\nabla \tilde{\phi}_{l_k}\otimes \nabla \tilde{\phi}_{l_k}-\nabla \tilde{\phi}\otimes \nabla \tilde{\phi}): \nabla vdx \nonumber\\
&=\int_{\mathcal{D}}\nu_1((\nabla \tilde{\phi}_{l_k}-\nabla \tilde{\phi})\otimes \nabla \tilde{\phi}_{l_k}+\nabla \tilde{\phi}\otimes (\nabla \tilde{\phi}_{l_k}-\nabla \tilde{\phi})): \nabla vdx.
\end{align*}
By the convergence $\tilde{\phi}_{l_k}\rightarrow \tilde{\phi}$ in $L_t^2H^1$, $\widetilde{\mathbb{P}}\mbox{-a.s.}$ and the bound (\ref{4.5}), we have $\widetilde{\mathbb{P}}\mbox{-a.s.}$
\begin{align*}
\int_{0}^{t}\int_{\mathcal{D}}|((\nabla \tilde{\phi}_{l_k}-\nabla \tilde{\phi})\otimes \nabla \tilde{\phi}_{l_k}):\nabla v| dxds&\leq \|\nabla \tilde{\phi}_{l_k}-\nabla \tilde{\phi}\|_{L_t^2L_x^2}\|\nabla \tilde{\phi}_{l_k}\|_{L^2_tL_x^6}\|\nabla v\|_{\mathrm{L}_x^{3}}\nonumber\\
&\leq \|\nabla \tilde{\phi}_{l_k}-\nabla \tilde{\phi}\|_{L_t^2L_x^2}\|\nabla \tilde{\phi}_{l_k}\|_{L^2_tH^1}\|\nabla v\|_{\mathrm{L}_x^{3}}\rightarrow 0.
\end{align*}
Also by the convergence $\tilde{\phi}_{l_k}\rightarrow \tilde{\phi}$ in $(L_t^2H^2)_w$ and the bound $\tilde{\phi}\in L_t^2H^2$, $\widetilde{\mathbb{P}}\mbox{-a.s.}$ we have
\begin{align*}
\int_{0}^{t}\int_{\mathcal{D}}\nabla \tilde{\phi}\otimes (\nabla \tilde{\phi}_{l_k}-\nabla \tilde{\phi})): \nabla vdxds\rightarrow 0,~ \widetilde{\mathbb{P}} \mbox{-a.s.}
\end{align*}
By Lemma \ref{lem6.1}, the H\"{o}lder inequality and the bound (\ref{4.5}), we have
\begin{align}\label{4.10}
\widetilde{\mathbb{E}}\left(\int_{0}^{T}B_1(\nu_1A_1\tilde{\phi}_{l_k},\tilde{ \phi}_{l_k}, v)dt\right)^p&\leq C\|v\|^p_{\mathrm{H}^{1,3}}\widetilde{\mathbb{E}}\left(\int_{0}^{T}\|A_1\tilde{\phi}_{l_k}\|_{H^{-1}}\|\nabla\tilde{ \phi}_{l_k}\|_{H^1}dt\right)^p\nonumber\\
&\leq CT^\frac{p}{2}\|v\|^p_{\mathrm{H}^{1,3}}\widetilde{\mathbb{E}}\left(\sup_{t\in [0,T]}\|\nabla\tilde{\phi}_{l_k}\|_{L^2}^{2p}\right)\widetilde{\mathbb{E}}\left(\int_{0}^{T}\| \nabla\tilde{\phi}_{l_k}\|^2_{H^1}dt\right)^p\nonumber\\ &\leq C.
\end{align}
Applying the same argument, we could infer that for any $\bar{v}\in H^1$
$$\int_{0}^{t}B_2(\tilde{u}_{l_k}, \tilde{\phi}_{l_k},\bar{v})-B_2(\tilde{u},\tilde{\phi}, \bar{v})ds\rightarrow 0, ~\widetilde{\mathbb{P}}\mbox{-a.s.}$$
and
\begin{align}\label{4.10*}
\widetilde{\mathbb{E}}\left(\int_{0}^{T}B_2(\tilde{u}_{l_k}, \tilde{\phi}_{l_k},\bar{v})dt\right)^p\leq C.
\end{align}

Finally, we pass the limit in stochastic term. Using the Condition {\bf C.2}
$$\|h(\tilde{u}_{l_k}, \nabla\tilde{\phi}_{l_k})-h(\tilde{u},\nabla\tilde{\phi})\|_{L_2(\mathcal{H};L_x^2)}\leq K_1\|(\tilde{u}_{l_k}-\tilde{u}, \nabla(\tilde{\phi}_{l_k}-\tilde{\phi}))\|_{L^2_x},$$
together with the Proposition $\ref{pro4.1}$ \eqref{4.5*} yields
$$\|h(\tilde{u}_{l_k}, \nabla\tilde{\phi}_{l_k})-h(\tilde{u},\nabla\tilde{\phi})\|_{L_2(\mathcal{H};L_x^2)}\rightarrow 0,~ \mbox{a.e.}~ (t, \omega)\in [0,T]\times \widetilde{\Omega}.$$
Combining the convergence and the bound (\ref{4.5}), we have from the Vitali convergence theorem (see Theorem \ref{thm4.1}),
$$h(\tilde{u}_{l_k}, \nabla\tilde{\phi}_{l_k})\rightarrow h(\tilde{u},\nabla\tilde{\phi}),~{\rm in}~ L^p_\omega L_t^2L_2(\mathcal{H};L_x^2),$$
which implies the convergence in probability in $L_t^2L_2(\mathcal{H};L_x^2)$. Moreover, from Proposition $\ref{pro4.1}$ \eqref{4.5*}, we have
$$\widetilde{\mathcal{W}}_{l_k}\rightarrow \widetilde{\mathcal{W}},~ {\rm in~ probability~ in} ~C_t(\mathcal{H}_0).$$
Then, applying the Lemma \ref{lem4.6}, we infer that
$$\int_{0}^{t}h(\tilde{u}_{l_k}, \nabla\tilde{\phi}_{l_k})d \widetilde{\mathcal{W}}_{l_k}\rightarrow \int_{0}^{t}h(\tilde{u}, \nabla\tilde{\phi})d \widetilde{\mathcal{W}},$$
in probability in $L_t^2L_2(\mathcal{H};L_x^2)$. Using the It\^{o} isometry and the Condition {\bf C.1}, we have
\begin{align}\label{4.11}
\widetilde{\mathbb{E}}\left|\int_{0}^{T}h(\tilde{u}_{l_k}, \nabla\tilde{\phi}_{l_k})d\widetilde{\mathcal{W}}_{l_k}\right|^2
&=C\widetilde{\mathbb{E}}\int_{0}^{T}\|h(\tilde{u}_{l_k}, \nabla\tilde{\phi}_{l_k})\|^2_{L_2(\mathcal{H};L^2_x)}dt\nonumber\\
&\leq CK_0\left(1+\widetilde{\mathbb{E}}\int_{0}^{T}\|\tilde{u}_{l_k}\|_{\mathrm{L}_x^2}^2+\|\nabla\tilde{\phi}_{l_k}\|_{L_x^2}^2dt\right)\nonumber\\
&\leq C.
\end{align}

 Proposition \ref{pro4.1} \eqref{4.5*} gives $(\tilde{u}_{l_k}, \tilde{\phi}_{l_k}) \rightarrow (\tilde{u}, \tilde{\phi})$, $\widetilde{\mathbb{P}}\mbox{-a.s.}$ in $C([0,T]; \mathbb{H}_w)$, using the dominated convergence theorem we get for all $t\in [0,T]$, $\widetilde{\mathbb{P}}\mbox{-a.s.}$
 $$\lim_{k\rightarrow \infty}\int_{0}^{t}|(\tilde{u}_{l_k}-\tilde{u}, v)|^2ds\rightarrow 0,~\lim_{k\rightarrow \infty} \int_{0}^{t}|(\tilde{\phi}_{l_k}-\tilde{\phi}, \bar{v})|^2ds\rightarrow 0,$$
which combines with the bound (\ref{4.5}), we obtain
\begin{align*}
\widetilde{\mathbb{E}}\int_{0}^{T}|(\tilde{u}_{l_k}-\tilde{u}, v)|^2dt\rightarrow 0,
~\widetilde{\mathbb{E}}\int_{0}^{T}|(\tilde{\phi}_{l_k}-\tilde{\phi}, \bar{v})|^2dt\rightarrow 0.
\end{align*}
By the same way, we have
\begin{align*}
\widetilde{\mathbb{E}}|(\tilde{u}_{l_k}(0)-\tilde{u}(0), v)|^2\rightarrow 0,~
\widetilde{\mathbb{E}}|(\tilde{\phi}_{l_k}(0)-\tilde{\phi}(0), \bar{v})|^2\rightarrow 0.
\end{align*}

For the linear term, since $(\tilde{u}_{l_k}, \tilde{\mu}_{l_k})\rightarrow (\tilde{u},\tilde{\mu})$, $\widetilde{\mathbb{P}}\mbox{-a.s.}$ in $(L_t^2(\mathrm{H}^1\times H^1))_w$, we infer that
$$\int_{0}^{t}\langle A_0\tilde{u}_{l_k}, v\rangle ds\rightarrow \int_{0}^{t}\langle A_0\tilde{u}, v\rangle ds, ~\int_{0}^{t}\langle A_1\tilde{\mu}_{l_k}, \bar{v}\rangle ds\rightarrow \int_{0}^{t}\langle A_1\tilde{\mu}, \bar{v}\rangle ds.$$
In addition, we have
\begin{align}
&\widetilde{\mathbb{E}}\left|\int_{0}^{T}\langle A_0\tilde{u}_{l_k}, v\rangle dt\right|^2\leq C\|v\|^2_{\mathrm{H}^1}\widetilde{\mathbb{E}}\int_{0}^{T}\| \nabla\tilde{u}_{l_k}\|^2_{\mathrm{L}_x^2} dt\leq C,\label{4.14*}\\
&\widetilde{\mathbb{E}}\left|\int_{0}^{T}\nu_2\langle A_1\tilde{\mu}_{l_k}, \bar{v}\rangle dt\right|^2\leq C\|\bar{v}\|^2_{H^1}\widetilde{\mathbb{E}}\int_{0}^{T}\| \nabla\tilde{\mu}_{l_k}\|^2_{L_x^2} dt\leq C.\label{4.15*}
\end{align}
All the constants $C$ in above are independent of $l_k$.

Next, define the functional
\begin{align*}
\mathbb{F}_1(\tilde{u}_{l_k},\tilde{ \phi}_{l_k},\widetilde{\mathcal{W}}_{l_k},v )&=(\tilde{u}_{l_k}(0), v)-\int_{0}^{t}\langle A_0\tilde{u}_{l_k}, v\rangle ds-\int_{0}^{t}B_0(\tilde{u}_{l_k},  \tilde{u}_{l_k}, v)ds\nonumber\\&\quad+\int_{0}^{t}B_1(\nu_1A_1\tilde{\phi}_{l_k},\tilde{ \phi}_{l_k}, v)ds+\left\langle\int_{0}^{t}h(\tilde{u}_{l_k}, \nabla\tilde{\phi}_{l_k})d \widetilde{\mathcal{W}}_{l_k}, v\right\rangle,\nonumber\\
\mathbb{F}_2(\tilde{u}_{l_k},\tilde{ \phi}_{l_k},\bar{v} )&=(\tilde{\phi}_{l_k}(0), \bar{v})-\int_{0}^{t}\nu_2\langle A_1\tilde{\mu}_{l_k}, \bar{v}\rangle ds-\int_{0}^{t}B_2(\tilde{u}_{l_k},  \tilde{\phi}_{l_k}, \bar{v})ds
\end{align*}
and
\begin{align*}
\overline{\mathbb{F}}_1(\tilde{u},\tilde{ \phi},\widetilde{\mathcal{W}},v )&=(\tilde{u}(0), v)-\int_{0}^{t}\langle A_0\tilde{u}, v\rangle ds-\int_{0}^{t}B_0(\tilde{u},  \tilde{u}, v)ds\nonumber\\&\quad+\int_{0}^{t}B_1(\nu_1A_1\tilde{\phi},\tilde{ \phi}, v)ds+\left\langle\int_{0}^{t}h(\tilde{u}, \nabla\tilde{\phi})d \widetilde{\mathcal{W}}, v\right\rangle,\nonumber\\
\overline{\mathbb{F}}_2(\tilde{u},\tilde{ \phi},\bar{v} )&=(\tilde{\phi}(0), \bar{v})-\int_{0}^{t}\nu_2\langle A_1\tilde{\mu}, \bar{v}\rangle ds-\int_{0}^{t}B_2(\tilde{u},  \tilde{\phi}, \bar{v})ds.
\end{align*}
Combining the bounds (\ref{4.8})-(\ref{4.15*}) and the convergence properties, using the Vitali convergence theorem, we conclude that for any set $B\in [0,T]$
\begin{align}
&\widetilde{\mathbb{E}}\int_{0}^{T}1_{B}(\mathbb{F}_1(\tilde{u}_{l_k},\tilde{ \phi}_{l_k},\widetilde{\mathcal{W}}_{l_k},v )-\overline{\mathbb{F}}_1(\tilde{u},\tilde{ \phi},\widetilde{\mathcal{W}},v ))dt\rightarrow 0,\label{4.14}\\
&\widetilde{\mathbb{E}}\int_{0}^{T}1_{B}(\mathbb{F}_2(\tilde{u}_{l_k},\tilde{ \phi}_{l_k},\widetilde{\mathcal{W}}_{l_k},\bar{v} )-\overline{\mathbb{F}}_2(\tilde{u},\tilde{ \phi},\widetilde{\mathcal{W}},\bar{v} ))dt\rightarrow 0, \label{4.15}
\end{align}
where $1_{\{\cdot\}}$ stands for the indicator function.

Since $(\tilde{u}_{l_k}, \tilde{ \phi}_{l_k})$ satisfies the system \eqref{E2.1} in the weak sense, that is, $$((\tilde{u}_{l_k},v), (\tilde{ \phi}_{l_k},\bar{v}))=(\mathbb{F}_1(\tilde{u}_{l_k},\tilde{ \phi}_{l_k},\widetilde{\mathcal{W}}_{l_k},v ), \mathbb{F}_2(\tilde{u}_{l_k},\tilde{ \phi}_{l_k},\widetilde{\mathcal{W}}_{l_k},\bar{v} )), ~\widetilde{\mathbb{P}}\mbox{-a.s.}$$
From (\ref{4.14})-(\ref{4.15}), we conclude that for any $t\in [0,T]$, $v\in \mathrm{H}^2, \bar{v}\in H^1$ and $\widetilde{\mathbb{P}}\mbox{-a.s.}$
$$(\tilde{u},v)=\overline{\mathbb{F}}_1(\tilde{u},\tilde{ \phi},\widetilde{\mathcal{W}},v),~~ (\tilde{\phi}, \bar{v})=\overline{\mathbb{F}}_2(\tilde{u},\tilde{ \phi},\bar{v} ).$$
Finally, by the density argument, the above equality holds for any $(v, \bar{v})\in \mathrm{H}^1\times H^1$. We obtain that the $(\widetilde{\Omega}, \widetilde{\mathcal{F}}, \widetilde{\mathbb{P}}, \tilde{u},\tilde{\phi}, \widetilde{\mathcal{W}})$ is a martingale solution of the system \eqref{Equ1.1}-\eqref{1.4}. 

\underline{Step 3}. Finally, we show $\eqref{4.4*}$ holds. Since $(u,\phi)$ is a global pathwise solution with the initial data $(u_0, \phi_0)$ and the fixed stochastic basis $(\Omega, \mathcal{F}, \mathbb{P}, \mathcal{W})$, also $(\widetilde{\Omega}, \widetilde{\mathcal{F}}, \widetilde{\mathbb{P}}, \tilde{u},\tilde{\phi}, \widetilde{\mathcal{W}})$ is a martingale solution of the system \eqref{Equ1.1}-\eqref{1.4} with the initial data $\Lambda$, where $\Lambda=\mathbb{P}\circ (u_0,\phi_0)^{-1}$, and the solution of system \eqref{Equ1.1}-\eqref{1.4} is pathwise unique, so unique in law, we deduce that $\eqref{4.4*}$ holds.

This completes the proof.
\end{proof}

\section{Existence of invariant measure}
In this section, we show the main result: existence of invariant measure.  We say a probability measure $\xi$ on $\mathcal{B}(X_w)$ is an invariant measure if
$$\int_{X}\mathbf{P}_t\varphi d\xi=\int_{X}\varphi d\xi,~~ {\rm for~ all}~t\geq 0, ~\varphi\in C_b(X_w).$$
Note that, if the space $X$ is separable, then $\mathcal{B}(X_w)=\mathcal{B}(X)$.

For any bounded Borel function $\varphi\in \mathcal{B}_b(\mathbb{H})$, we define
\begin{align}\label{5.1*}
(\mathbf{P}_t\varphi)(U_0)=\mathbb{E}[\varphi(U(t, U_0))],~ U_0\in \mathbb{H},
\end{align}
where $U(t, U_0)=(u(t, u_0), \phi(t, \phi_0)))$ is the pathwise solution of system (\ref{Equ1.1})-\eqref{1.5} starting from the initial data $U_0=(u_0,\phi_0)$, which is a Markov process. The proof of Markov property is standard, see \cite{brze, bess}. We also have $\mathbf{P}_{t+s}\varphi(\cdot)=\mathbf{P}_{t}\mathbf{P}_{s}\varphi(\cdot)$ for $t,s\geq 0$.
\begin{lemma}\label{lem5.1} Let $\varphi:\mathbb{H}\rightarrow \mathbb{R}$ be the bounded sequentially weakly continuous function. Then, we have $$\mathbf{P}_t:C_b(\mathbb{H}_w)\rightarrow C_b(\mathbb{H}_w),$$
particularly, if $U_{0,l}\rightarrow U_0$ weakly in $\mathbb{H}$, then
$$(\mathbf{P}_t\varphi)(U_{0,l})\rightarrow (\mathbf{P}_t\varphi)(U_0).$$
\end{lemma}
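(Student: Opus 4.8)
The plan is to deduce the statement directly from the continuous dependence Proposition \ref{pro4.1}, combined with the equality of laws it provides and a subsequence argument. Boundedness of $\mathbf{P}_t\varphi$ is immediate from the definition \eqref{5.1*}, since $|(\mathbf{P}_t\varphi)(U_0)|\leq \sup_{\mathbb{H}}|\varphi|$ for every $U_0\in\mathbb{H}$. The real content of the lemma is thus the sequential weak continuity of the map $U_0\mapsto(\mathbf{P}_t\varphi)(U_0)$, which is exactly the displayed assertion.

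First I would fix $t$ and a sequence $U_{0,l}=(u_{0,l},\phi_{0,l})\to U_0=(u_0,\phi_0)$ weakly in $\mathbb{H}$. Writing $U_l(t)=(u_l(t),\phi_l(t))$ for the pathwise solution started from $U_{0,l}$ and $U(t)=(u(t),\phi(t))$ for the one started from $U_0$, the goal is $\mathbb{E}[\varphi(U_l(t))]\to\mathbb{E}[\varphi(U(t))]$. A weakly convergent sequence is bounded, so \eqref{4.1*} holds and Proposition \ref{pro4.1} applies: along a subsequence $l_k$ it produces, on a new probability space $(\widetilde{\Omega},\widetilde{\mathcal{F}},\widetilde{\mathbb{P}})$, copies $(\tilde u_{l_k},\tilde\phi_{l_k})$ and $(\tilde u,\tilde\phi)$ having the same laws as $(u_{l_k},\phi_{l_k})$ and $(u,\phi)$ respectively, which converge $\widetilde{\mathbb{P}}$-a.s. in the topology of $\mathcal{X}$, in particular in $C([0,T];\mathbb{H}_w)$.

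Next I would evaluate at time $t$. Since the evaluation map $C([0,T];\mathbb{H}_w)\to\mathbb{H}_w$ is continuous, the $\widetilde{\mathbb{P}}$-a.s. convergence in $C([0,T];\mathbb{H}_w)$ yields $(\tilde u_{l_k}(t),\tilde\phi_{l_k}(t))\to(\tilde u(t),\tilde\phi(t))$ weakly in $\mathbb{H}$, $\widetilde{\mathbb{P}}$-a.s. As $\varphi$ is sequentially weakly continuous this gives $\varphi(\tilde u_{l_k}(t),\tilde\phi_{l_k}(t))\to\varphi(\tilde u(t),\tilde\phi(t))$, $\widetilde{\mathbb{P}}$-a.s., and since $\varphi$ is bounded the dominated convergence theorem upgrades this to $\widetilde{\mathbb{E}}[\varphi(\tilde u_{l_k}(t),\tilde\phi_{l_k}(t))]\to\widetilde{\mathbb{E}}[\varphi(\tilde u(t),\tilde\phi(t))]$. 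Transferring through the equalities of law \eqref{4.3*} and \eqref{4.4*} (the composition of $\varphi$ with the evaluation being Borel on the relevant path space, and $\mathbb{H}$ being separable so that weak and norm Borel structures coincide), the left side equals $\mathbb{E}[\varphi(U_{l_k}(t))]=(\mathbf{P}_t\varphi)(U_{0,l_k})$ and the right side equals $\mathbb{E}[\varphi(U(t))]=(\mathbf{P}_t\varphi)(U_0)$, so convergence holds along $l_k$.

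The main obstacle is that Proposition \ref{pro4.1} only furnishes a subsequence, whereas the claim concerns the whole sequence. I would close this gap with the Urysohn subsequence principle: the argument above applies verbatim to \emph{any} subsequence of $(U_{0,l})_l$, and every resulting sub-subsequential limit equals the single value $(\mathbf{P}_t\varphi)(U_0)$. This is precisely where the uniqueness in law of the pathwise solution started from $U_0$ (already invoked in \eqref{4.4*}) is essential, as it forces the limit to be independent of the chosen subsequence. Since every subsequence of the real sequence $(\mathbf{P}_t\varphi)(U_{0,l})$ then has a further subsequence converging to $(\mathbf{P}_t\varphi)(U_0)$, the full sequence converges to $(\mathbf{P}_t\varphi)(U_0)$, which together with the boundedness established at the outset proves $\mathbf{P}_t:C_b(\mathbb{H}_w)\to C_b(\mathbb{H}_w)$.
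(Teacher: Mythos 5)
Your proposal is correct and follows essentially the same route as the paper: invoke Proposition \ref{pro4.1} on a new probability space, use sequential weak continuity of $\varphi$ plus dominated convergence, transfer back through the equalities of laws \eqref{4.3*}--\eqref{4.4*}, and upgrade from a subsequence to the full sequence by the sub-subsequence argument, with uniqueness in law pinning down the limit. Your added remarks (weak convergence implies the uniform bound \eqref{4.1*}, continuity of the evaluation map at time $t$, and the Borel-measurability point via separability of $\mathbb{H}$) only make explicit steps the paper leaves implicit.
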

\begin{proof} The proof is inspired by \cite{brze}. From the Theorem \ref{thm3.1}, corresponding to the initial data $U_{0,l}$, there exists a sequence $\mathbb{H}$-valued $\mathcal{F}_t$ progressive measurable processes $U_{l}=(u_{l}, \phi_l)$ which is the solution of system (\ref{Equ1.1})-\eqref{1.5}. Using the Proposition \ref{pro4.1}, there exist a new probability space
$(\widetilde{\Omega}, \widetilde{\mathcal{F}}, \widetilde{\mathbb{P}})$, a new subsequence $(\tilde{u}_{l_k},\tilde{\phi}_{l_k})$ and the processes $(\tilde{u},\tilde{\phi})$, such that
$$(\tilde{u}_{l_k},\tilde{\phi}_{l_k})~{\rm and} ~(u_{l_k},\phi_{l_k}), ~ (\tilde{u}, \tilde{\phi})~{\rm and}~(u,\phi) ~{\rm have ~ the ~same ~joint~ distribution~ in} ~\mathcal{X}_{u,\phi},$$
and
$$(\tilde{u}_{l_k},\tilde{\phi}_{l_k})\rightarrow (\tilde{u},\tilde{\phi}), ~{\rm in~ the~ topology ~of~ \mathcal{X}_{u,\phi}}, ~\widetilde{\mathbb{P}}\mbox{-a.s.}$$
where $\widetilde{U}=(\tilde{u},\tilde{\phi})$ satisfies the system \eqref{E2.1}.

Since $\widetilde{U}_{l_k}:=(\tilde{u}_{l_k},\tilde{\phi}_{l_k})\rightarrow \widetilde{U}$ weakly in $C([0,T]; \mathbb{H}_w)$, this together with the fact that $\varphi$ is a bounded sequentially weakly continuous function, yields
$$\varphi(\widetilde{U}_{l_k})\rightarrow \varphi(\widetilde{U}),~ {\rm in} ~\mathbb{R}, ~\widetilde{\mathbb{P}}\mbox{-a.s.}$$
Since the function $\varphi$ is bounded, we infer from the dominated convergence theorem
$$\widetilde{\mathbb{E}}\left[\varphi(\widetilde{U}_{l_k}(t;\widetilde{U}_{0,l_k}))\right]\rightarrow \widetilde{\mathbb{E}}\left[\varphi(\widetilde{U}(t, \widetilde{U}_0))\right], ~{\rm as} ~k\rightarrow \infty.$$
Since the processes $\widetilde{U}_{l_k}$ and $U_{l_k}$, $\widetilde{U}$ and $U$ have the same distribution which lead to
\begin{align}
&\widetilde{\mathbb{E}}\left[\varphi(\widetilde{U}_{l_k}(t;\widetilde{U}_{0,l_k}))\right]=\mathbb{E}\left[\varphi(U_{l_k}(t;U_{0,l_k}))\right]
=(\mathbf{P}_t\varphi)(U_{0,l_k}),\label{5.2*}\\
&\widetilde{\mathbb{E}}\left[\varphi(\widetilde{U}(t;\widetilde{U}_0))\right]=\mathbb{E}\left[\varphi(U(t;U_0))\right]=(\mathbf{P}_t\varphi)(U_{0}).\label{5.3*}
\end{align}
Therefore, by \eqref{5.2*},\eqref{5.3*}, we have
$$\lim_{k\rightarrow\infty}(\mathbf{P}_t\varphi)(U_{0,l_k})=(\mathbf{P}_t\varphi)(U_{0}).$$
Using the sub-subsequence argument, we have the original sequence
$$\lim_{k\rightarrow\infty}(\mathbf{P}_t\varphi)(U_{0,l})=(\mathbf{P}_t\varphi)(U_{0}).$$
Obviously, $\mathbf{P}_t\varphi$ from $\mathbb{H}$ into $\mathbb{R}$ is bounded. This completes the proof.
\end{proof}

\begin{lemma}\label{lem5.2} Let $U(t)$ be the solution of system \eqref{E2.1} with the initial data $U_0$. Then, there exists $T_0$ such that for every $\varepsilon>0$, there exists $R>0$ such that
$$\sup_{T>T_0}\frac{1}{T}\int_{0}^{T}(\mathbf{P}_t^*\delta_{U_0})(\mathbb{H}\setminus B_R)dt\leq \varepsilon,$$
where $B_R:=\{U(t)\in \mathbb{H}: \|U(t)\|_{\mathbb{H}}\leq R\}$.
\end{lemma}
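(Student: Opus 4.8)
The plan is to reduce the statement to a linear-in-time energy bound and then close that bound by an It\^o energy balance combined with an auxiliary elliptic estimate. Since $U(t,U_0)$ is a Markov process, the dual semigroup applied to a Dirac mass is just the law of the solution, so
\[
(\mathbf{P}_t^*\delta_{U_0})(\mathbb{H}\setminus B_R)=\mathbb{P}\{\|U(t)\|_{\mathbb{H}}>R\},
\]
and by the Chebyshev inequality
\[
\frac{1}{T}\int_0^T(\mathbf{P}_t^*\delta_{U_0})(\mathbb{H}\setminus B_R)\,dt\leq \frac{1}{R^2}\,\frac{1}{T}\int_0^T\mathbb{E}\|U(t)\|_{\mathbb{H}}^2\,dt.
\]
Hence it suffices to prove the \emph{linear growth} $\int_0^T\mathbb{E}\|U(t)\|_{\mathbb{H}}^2\,dt\leq C(1+T)$ with $C$ independent of $T$; taking $T_0=1$ then bounds the time average by $2C/R^2$, and choosing $R=R(\varepsilon)$ large gives $\varepsilon$.

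To produce the linear growth I would apply It\^o's formula to the total energy $\mathcal{E}(t)=\tfrac12\|u\|_{\mathrm{L}_x^2}^2+E(\phi)$. Testing the momentum equation with $u$ annihilates $B_0(u,u,u)$, while pairing the phase equation with $\mu$ converts $(\partial_t\phi,\mu)$ into $\tfrac{d}{dt}E(\phi)$ and yields the dissipation $\nu_2\|\nabla\mu\|_{L_x^2}^2$; the two coupling contributions $-B_1(\nu_1A_1\phi,\phi,u)$ and $B_2(u,\phi,\mu)$ then cancel, the $\kappa f(\phi)\nabla\phi$ part of $B_2$ being a gradient killed by $\mathrm{div}\,u=0$. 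Taking expectations removes the martingale, Condition \textbf{C.1} controls the It\^o correction, and we arrive at
\[
\mathbb{E}\mathcal{E}(T)+\mathbb{E}\int_0^T\big(\|\nabla u\|_{\mathrm{L}_x^2}^2+\nu_2\|\nabla\mu\|_{L_x^2}^2\big)\,dt\leq \mathcal{E}(0)+\tfrac12 K_0 T+\tfrac12 K_0\,\mathbb{E}\int_0^T\big(\|u\|_{\mathrm{L}_x^2}^2+\|\nabla\phi\|_{L_x^2}^2\big)\,dt.
\]

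The core of the argument, and the main obstacle, is absorbing the last two terms. For the velocity I would use Poincar\'e $\|u\|_{\mathrm{L}_x^2}^2\leq C\|\nabla u\|_{\mathrm{L}_x^2}^2$ together with Condition \textbf{C.3} ($CK_0\leq2$) to dominate $\tfrac12K_0\|u\|^2$ by the dissipation $\|\nabla u\|^2$. The genuinely delicate quantity is $\int_0^T\mathbb{E}\|\nabla\phi\|_{L_x^2}^2\,dt$, which the phase equation does not control directly. Here I would exploit the constitutive relation $\mu=\nu_1A_1\phi+\kappa f(\phi)$: pairing with $\phi-\langle\phi\rangle$ and integrating by parts gives
\[
\nu_1\|\nabla\phi\|_{L_x^2}^2=(\mu-\langle\mu\rangle,\phi-\langle\phi\rangle)-\kappa(f(\phi),\phi-\langle\phi\rangle),
\]
so by Poincar\'e and Young $\|\nabla\phi\|_{L_x^2}^2\lesssim\|\nabla\mu\|_{L_x^2}^2+|(f(\phi),\phi-\langle\phi\rangle)|$. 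With the specific potential $f(\phi)=C_1\phi^3-C_2\phi$ (cf. Remark \ref{rem3.2}) the remaining pairing can be re-expressed through $E(\phi)$ and the conserved mean $\langle\phi\rangle=\langle\phi_0\rangle$, so its time integral is controlled by $\mathbb{E}\int_0^TE(\phi)\,dt$ plus an $O(T)$ term. Feeding this back, the coefficient multiplying $\|\nabla\mu\|^2$ is $\tfrac12K_0c$ for a fixed constant $c$, and Condition \textbf{C.4} (large $\nu_2$) ensures $\tfrac12K_0c<\nu_2$, so $\tfrac12K_0\|\nabla\phi\|^2$ is absorbed by the dissipation $\nu_2\|\nabla\mu\|^2$.

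After these absorptions the dissipation and the unwanted energy integrals move to the left-hand side, and since $E(\phi)\geq-C|\mathcal{D}|$ ($F$ being bounded below) we obtain $\mathbb{E}\int_0^T(\|u\|_{\mathrm{L}_x^2}^2+\|\nabla\phi\|_{L_x^2}^2)\,dt\leq C(1+T)$; the conserved mean upgrades this to the full norm, giving $\int_0^T\mathbb{E}\|U(t)\|_{\mathbb{H}}^2\,dt\leq C(1+T)$. Combined with the Chebyshev reduction this proves the lemma. The principal difficulty, as flagged in the introduction, is exactly this closure of the $\nabla\phi$ estimate: one must simultaneously balance the cubic nonlinearity against the free energy and tune the smallness of $K_0$ (Condition \textbf{C.3}) and the largeness of $\nu_2$ (Condition \textbf{C.4}) so that no super-linear power of $T$ survives.
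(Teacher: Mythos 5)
Your proposal is correct and takes essentially the same route as the paper: the Chebyshev reduction to the linear-in-time bound $\int_0^T\mathbb{E}\|U(t)\|_{\mathbb{H}}^2\,dt\leq C(1+T)$, the It\^o energy balance exploiting the cancellation $B_2(u,\phi,\mu)=B_1(\nu_1A_1\phi,\phi,u)$ with Condition \textbf{C.1} for the It\^o correction, closure of the $\nabla\phi$ estimate by pairing the constitutive relation $\mu=\nu_1A_1\phi+\kappa f(\phi)$ with (a mean-adjusted) $\phi$ so the quartic part of the potential absorbs the lower-order terms, and finally the absorptions via Poincar\'e with \textbf{C.3} for $u$ and large $\nu_2$ (\textbf{C.4}) for $\|\nabla\mu\|_{L_x^2}^2$. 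The only cosmetic difference is that you test with $\phi-\langle\phi\rangle$, so the mean of $\mu$ drops out automatically, whereas the paper tests with $\phi$ and separately estimates $\langle\mu\rangle=\kappa\langle f(\phi)\rangle$; both yield the same absorption scheme and the same conclusion.
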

\begin{proof} \underline{Step 1}. We first show that there exists constant $C$ independent of $t$ such that
$$\int_{0}^{t}\mathbb{E}\|U(s)\|_{\mathbb{H}}^2ds\leq C(1+t).$$
Using the It\^o formula to function $\|u\|_{\mathrm{L}^2_x}^2$, by (\ref{2.1*}), we have
\begin{align}\label{5.3}
\|u\|_{\mathrm{L}^2_x}^2+2\int_{0}^{t}\|\nabla u\|_{\mathrm{L}^2_x}^2ds&=\|u_0\|_{\mathrm{L}^2_x}^2+2\int_{0}^{t}B_1(\nu_1A_1\phi,\phi,u)ds+2\int_{0}^{t}(h(u, \nabla \phi)d\mathcal{W},u)\nonumber\\
&\quad+\int_{0}^{t}\|h(u, \nabla \phi)\|^2_{L_2(\mathcal{H};L^2_x)}ds.
\end{align}
Taking inner product with $\mu$ on both sides of equation $(\ref{E2.1})_2$, we have
\begin{align}\label{5.4}
&\nu_1\|\nabla \phi\|_{L^2_x}^2+2\kappa(F(\phi),1)+2\nu_2\int_{0}^{t}\|\nabla \mu\|_{L^2_x}^2ds+2\int_{0}^{t}B_2(u,\phi,\mu)ds\nonumber\\&=\nu_1\|\nabla \phi_0\|_{L^2_x}^2+2\kappa(F(\phi_0),1).
\end{align}
Taking the sum of (\ref{5.3})-(\ref{5.4}), using the relation
$$B_2(u,\phi,\mu)=B_1(\nu_1A_1\phi,\phi,u),$$
we have
\begin{align}\label{5.6}
&\|u\|_{\mathrm{L}^2_x}^2+\nu_1\|\nabla \phi\|_{L^2_x}^2+2\kappa(F(\phi),1)+2\int_{0}^{t}\|\nabla u\|_{\mathrm{L}^2_x}^2+\nu_2\|\nabla \mu\|_{L^2_x}^2ds\nonumber\\&=\|u_0\|_{\mathrm{L}^2_x}^2+\nu_1\|\nabla \phi_0\|_{L^2_x}^2+2\kappa(F(\phi_0),1)+2\int_{0}^{t}(h(u, \nabla \phi)d\mathcal{W},u)\nonumber\\&\quad+\int_{0}^{t}\|h(u, \nabla \phi)\|^2_{L_2(\mathcal{H};L^2_x)}ds.
\end{align}
Taking expectation on both sides of \eqref{5.6}, we arrive
\begin{align}\label{3.6**}
&\mathbb{E}\left(\|u\|_{\mathrm{L}^2_x}^2+\nu_1\|\nabla \phi\|_{L^2_x}^2+2\kappa(F(\phi),1)+2\int_{0}^{t}\|\nabla u\|_{\mathrm{L}^2_x}^2+\nu_2\|\nabla \mu\|_{L^2_x}^2ds\right)\nonumber\\&=\|u_0\|_{\mathrm{L}^2_x}^2+\nu_1\|\nabla \phi_0\|_{L^2_x}^2+2\kappa(F(\phi_0),1)+\mathbb{E}\int_{0}^{t}\|h(u, \nabla \phi)\|^2_{L_2(\mathcal{H};L^2_x)}ds.
\end{align}
The stochastic term is a square integral martingale which its expectation vanished. Indeed,
\begin{align}\label{5.7}
\mathbb{E}\int_{0}^{t}\|h(u, \nabla \phi)\|^2_{L_2(\mathcal{H};\mathrm{L}^2_x)}ds\leq K_0\mathbb{E}\int_{0}^{t}1+(\|u\|_{\mathrm{L}^2_x}^2+\|\nabla \phi\|_{L^2_x}^2)ds\leq C.
\end{align}

By \eqref{5.7} again, we get from \eqref{3.6**}
\begin{align}\label{5.8*}
&\mathbb{E}\left(\|u\|_{\mathrm{L}^2_x}^2+\nu_1\|\nabla \phi\|_{L^2_x}^2+2\kappa C_1\|\phi\|_{L_x^4}^4+2\int_{0}^{t}\|\nabla u\|_{\mathrm{L}^2_x}^2+\nu_2\|\nabla \mu\|_{L^2_x}^2ds\right)\nonumber\\&\leq\|u_0\|_{\mathrm{L}^2_x}^2+\nu_1\|\nabla \phi_0\|_{L^2_x}^2+2\kappa(F(\phi_0),1)\nonumber\\&\quad+ K_0\mathbb{E}\int_{0}^{t}1+(\|u\|_{\mathrm{L}^2_x}^2+\|\nabla \phi\|_{L^2_x}^2)ds+2\kappa C_2\mathbb{E}\|\phi\|_{L_x^2}^2\nonumber\\
&\leq \|u_0\|_{\mathrm{L}^2_x}^2+\nu_1\|\nabla \phi_0\|_{L^2_x}^2+2\kappa(F(\phi_0),1)\nonumber\\&\quad+ K_0\mathbb{E}\int_{0}^{t}1+(\|u\|_{\mathrm{L}^2_x}^2+\|\nabla \phi\|_{L^2_x}^2)ds+2\kappa C_2|\mathcal{D}|^\frac{1}{2}\mathbb{E}\|\phi\|_{L_x^4}^2\nonumber\\
&\leq \|u_0\|_{\mathrm{L}^2_x}^2+\nu_1\|\nabla \phi_0\|_{L^2_x}^2+2\kappa(F(\phi_0),1)+2\delta_0\kappa C_2|\mathcal{D}|^\frac{1}{2}\mathbb{E}\|\phi\|_{L_x^4}^4+C(\delta_0, \kappa, C_2, \mathcal{D})+K_0t\nonumber\\ &\quad+K_0\mathbb{E}\int_{0}^{t}\|u\|_{\mathrm{L}^2_x}^2+\|\nabla \phi\|_{L^2_x}^2ds,
\end{align}
where we choose $\delta_0$ small such that $2\delta_0\kappa C_2|\mathcal{D}|^\frac{1}{2}\leq 2\kappa C_1$.

The key point is to control the last term $\mathbb{E}\int_{0}^{t}\|\nabla \phi\|_{L^2_x}^2ds$. Taking inner product with $\phi$ with $(\ref{E2.1})_3$, then integral in time $t$ and taking expectation, we get
\begin{align}\label{3.9*}
\nu_1\mathbb{E}\int_{0}^{t}\|\nabla \phi\|_{L_x^2}^2ds+\kappa\mathbb{E}\int_{0}^{t}(F(\phi),1)ds&=\mathbb{E}\int_{0}^{t}(\mu, \phi)ds.
\end{align}
Note that, by $(\ref{E2.1})_3$ and the boundary condition \eqref{1.2}, we get
$$\langle \mu \rangle=\kappa\langle f(\phi)\rangle.$$
Using the  H\"{o}lder inequality and the Schwartz inequality, we get
\begin{align}\label{5.10}
\langle f(\phi)\rangle &=\frac{1}{|\mathcal{D}|}\int_{\mathcal{D}}f(\phi)dx=\frac{1}{|\mathcal{D}|}\int_{\mathcal{D}}C_1\phi^3-C_2\phi dx\nonumber\\
&\leq \frac{1}{|\mathcal{D}|}\left(C_1|\mathcal{D}|^\frac{1}{4}\int_{\mathcal{D}}\phi^4 dx^\frac{3}{4}+C_2|\mathcal{D}|^\frac{1}{2}\int_{\mathcal{D}}\phi^2 dx^\frac{1}{2}\right)\nonumber\\
&\leq \frac{C_1 \delta}{|\mathcal{D}|}\int_{\mathcal{D}}\phi^4 dx+\frac{C_2 \delta}{|\mathcal{D}|}\int_{\mathcal{D}}\phi^2 dx+\frac{C}{\delta},
\end{align}
where $\delta$ is constant selected later.

Using the H\"{o}lder inequality, the Schwartz inequality and boundary condition (\ref{1.2}), estimate (\ref{5.10}), the right hand term of \eqref{3.9*} could be controlled as follows
\begin{align}\label{3.10*}
\mathbb{E}\int_{0}^{t}(\mu, \phi)ds&\leq \mathbb{E}\int_{0}^{t}\|\mu\|_{L_x^2}\|\phi\|_{L_x^2}ds\leq \frac{1}{2\delta} \mathbb{E}\int_{0}^{t}\|\mu\|_{L_x^2}^2ds+\frac{\delta}{2}\mathbb{E}\int_{0}^{t}\|\phi\|_{L_x^2}^2ds\nonumber\\
&\leq\frac{ C}{2\delta} \mathbb{E}\int_{0}^{t}\|\nabla\mu\|_{L_x^2}^2+\|\langle\mu\rangle\|_{L_x^2}^2ds+\frac{C\delta}{2}\mathbb{E}\int_{0}^{t}\|\nabla\phi\|_{L_x^2}^2
+\|\langle\phi\rangle\|_{L_x^2}^2ds\nonumber\\
&\leq \frac{ C}{2\delta} \mathbb{E}\int_{0}^{t}\|\nabla\mu\|_{L_x^2}^2+\frac{C_1 \delta}{\sqrt{|\mathcal{D}|}}\int_{\mathcal{D}}\phi^4 dx+\frac{C(\mathcal{D})}{\delta}ds\nonumber\\&\quad+\frac{C\delta}{2}\mathbb{E}\int_{0}^{t}\|\nabla\phi\|_{L_x^2}^2
+\|\langle\phi_0\rangle\|_{L_x^2}^2ds+\frac{C_2 \delta}{|\mathcal{D}|}\mathbb{E}\int_{0}^{t}\|\nabla\phi\|_{L_x^2}^2
+\|\langle\phi_0\rangle\|_{L_x^2}^2ds.
\end{align}
Here, choosing $\delta, K_0$ small such that
$$\max\left\{\frac{C\delta}{2}+\frac{C_2 \delta^2}{2|\mathcal{D}|}+K_0, \frac{C_1 \delta}{\sqrt{|\mathcal{D}|}}\right\}\leq \min\{\nu_1, \kappa\},$$
then choosing $2\nu_2>\frac{ C}{2\delta}$.
The Dirichlet boundary condition gives
\begin{align}\label{5.12*}
\|u\|_{\mathrm{L}^2_x}\leq C\|\nabla u\|_{\mathrm{L}^2_x},
\end{align}
where constant $C$ depends on $\mathcal{D}$.
Combining (\ref{5.8*}), (\ref{3.9*}) and \eqref{3.10*}, \eqref{5.12*}, we get
\begin{align}\label{5.13}
\mathbb{E}\Bigg(\|u\|_{\mathrm{L}^2_x}^2&+C\int_{0}^{t}\|\phi\|_{L_x^4}^4ds+C\int_{0}^{t}\|\nabla \phi\|_{L^2_x}^2ds\nonumber\\&+\int_{0}^{t}(2-K_0C)\|\nabla u\|_{\mathrm{L}^2_x}^2+(2\nu_2-\frac{ C}{2\delta})\|\nabla \mu\|_{L^2_x}^2ds\Bigg)\nonumber\\
\leq &C(C_1, C_2, \kappa,\nu_1, \delta, \mathcal{D})+C(C_1, C_2, \kappa,\nu_1,\delta, \mathcal{D})t+C_2\kappa\mathbb{E}\int_{0}^{t}\|\phi\|_{L_x^2}^2ds\nonumber\\
&+\|u_0\|_{\mathrm{L}^2_x}^2+\nu_1\|\nabla \phi_0\|_{L^2_x}^2+2\kappa(F(\phi_0),1).
\end{align}
Using the Sobolev embedding $H^1\hookrightarrow L_x^4\hookrightarrow L_x^2$, we have
$$|2\kappa(F(\phi_0),1)|\leq C(\|\phi_0\|_{L_x^4}^4+\|\phi_0\|_{L_x^2}^2)\leq C(1+\|\phi_0\|_{H^1}^4).$$
According to the Condition ${\bf C. 4}$, by \eqref{5.13} and above estimate, we have
\begin{align}\label{5.14}
&\mathbb{E}\left(C\int_{0}^{t}\|\phi\|_{L_x^4}^4ds+C\int_{0}^{t}\|\phi\|_{H^1}^2ds+\int_{0}^{t}(2-K_0C)\|\nabla u\|_{\mathrm{L}^2_x}^2ds\right)\nonumber\\
\leq &C(C_1, C_2, C_3, \kappa,\nu_1, \delta, \mathcal{D})+C(C_1, C_2, \kappa,\nu_1,\delta, \mathcal{D})t+(C+C_2\kappa)\mathbb{E}\int_{0}^{t}\|\phi\|_{L_x^2}^2ds.
\end{align}
Since
\begin{align}\label{5.15}
(C+C_2\kappa)\mathbb{E}\int_{0}^{t}\|\phi\|_{L_x^2}^2ds&\leq C(C_2, \kappa, \mathcal{D})\mathbb{E}\int_{0}^{t}\|\phi\|_{L_x^4}^2ds\nonumber\\
&\leq \delta_1 C(C_2, \kappa, \mathcal{D})\mathbb{E}\int_{0}^{t}\|\phi\|_{L_x^4}^4ds+C(\delta_1, C_2, \kappa, \mathcal{D})t.
\end{align}
Choosing $\delta_1$ small, combining \eqref{5.14}, \eqref{5.15} and the Condition ${\bf C. 3}$, we have 
\begin{align*}
\int_{0}^{t}\mathbb{E}\left(\|\nabla u\|_{\mathrm{L}^2_x}^2+\| \phi\|_{H^1}^2\right)ds\leq C+Ct,
\end{align*}
where constant $C$ depends on $(C_1, C_2, C_3, \kappa,\nu_1, \nu_2, \mathcal{D}, K_0)$ but independent of $t$.

By \eqref{5.12*} again, we conclude
\begin{align}\label{5.13*}
\int_{0}^{t}\mathbb{E}\left(\| u\|_{\mathrm{L}^2_x}^2+\| \phi\|_{H^1}^2\right)ds\leq C+Ct,
\end{align}
where constant $C$ depends on $(C_1, C_2, C_3, \kappa,\nu_1, \nu_2, \mathcal{D}, K_0)$ but independent of $t$.

\underline{Step 2}. By the Chebyshev inequality and (\ref{5.13*}),
\begin{align*}
\frac{1}{T}\int_{0}^{T}(\mathbf{P}_t^*\delta_{U_0})(\mathbb{H}\setminus B_R)dt&=\frac{1}{T}\int_{0}^{T}\mathbb{P}\{\|U(t)\|_{\mathbb{H}}>R\}dt\nonumber\\
&\leq \frac{1}{R^2T}\int_{0}^{T}\mathbb{E}\|U(t)\|_{\mathbb{H}}^2dt\nonumber\\
&\leq \frac{C+CT}{R^2T}.
\end{align*}
This completes the proof.
\end{proof}

\begin{theorem}\label{thm5.1} Suppose that the Conditions ${\bf C. 1}$-${\bf C. 4}$ hold, then the transition semigroup defined by \eqref{5.1*} has an invariant measure.
\end{theorem}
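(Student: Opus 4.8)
The plan is to invoke the Maslowski--Seidler generalization \cite{Mas} of the Krylov--Bogoliubov theorem, which produces an invariant measure on the weak-topology space $\mathbb{H}_w$ out of two ingredients that are already in hand: the sequentially weak Feller property $\mathbf{P}_t:C_b(\mathbb{H}_w)\rightarrow C_b(\mathbb{H}_w)$ established in Lemma \ref{lem5.1}, and the uniform averaged bound of Lemma \ref{lem5.2}. First I would fix an arbitrary initial datum $U_0\in\mathbb{H}$ (for instance $U_0=0$) and introduce the averaged measures
\begin{align*}
\mu_T:=\frac{1}{T}\int_{0}^{T}\mathbf{P}_t^*\delta_{U_0}\,dt,\qquad T>T_0,
\end{align*}
which is exactly the family \eqref{1.6} with the choice $\xi=\delta_{U_0}$.

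The first substantial step is tightness of $\{\mu_T\}_{T>T_0}$ on $(\mathbb{H}_w,\mathcal{B}(\mathbb{H}_w))$. The decisive structural fact is that, since $\mathbb{H}=\mathrm{L}_x^2\times H^1$ is a Hilbert space, every closed ball $B_R=\{U:\|U\|_{\mathbb{H}}\leq R\}$ is compact in the weak topology $\mathbb{H}_w$ by the Banach--Alaoglu theorem; moreover, $\mathbb{H}$ being separable, the weak topology restricted to $B_R$ is metrizable. Lemma \ref{lem5.2} states precisely that, given $\varepsilon>0$, one may choose $R$ so large that $\mu_T(\mathbb{H}\setminus B_R)\leq\varepsilon$ uniformly in $T>T_0$. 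Hence every $\mu_T$ places all but $\varepsilon$ of its mass on the weakly compact set $B_R$, which is tightness in the sense appropriate to $\mathbb{H}_w$. By the Jakubowski-type version of Prokhorov's theorem underlying the Maslowski--Seidler framework, this tightness yields a sequence $T_n\rightarrow\infty$ and a probability measure $\xi$ on $\mathbb{H}_w$ with $\mu_{T_n}\rightharpoonup\xi$, meaning $\int\varphi\,d\mu_{T_n}\rightarrow\int\varphi\,d\xi$ for every $\varphi\in C_b(\mathbb{H}_w)$.

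It remains to verify invariance, and here the weak Feller property is exactly what is needed. For $\varphi\in C_b(\mathbb{H}_w)$ and $s\geq0$, the semigroup identity $\mathbf{P}_{t+s}=\mathbf{P}_t\mathbf{P}_s$ together with a change of variables gives
\begin{align*}
\int_{\mathbb{H}}\mathbf{P}_s\varphi\,d\mu_T-\int_{\mathbb{H}}\varphi\,d\mu_T
=\frac{1}{T}\left(\int_{T}^{T+s}-\int_{0}^{s}\right)(\mathbf{P}_t\varphi)(U_0)\,dt,
\end{align*}
whose absolute value is bounded by $2s\|\varphi\|_\infty/T\rightarrow0$. Passing to the limit along $T_n$, the term involving $\varphi$ converges to $\int\varphi\,d\xi$, while the term involving $\mathbf{P}_s\varphi$ converges to $\int\mathbf{P}_s\varphi\,d\xi$ precisely because $\mathbf{P}_s\varphi\in C_b(\mathbb{H}_w)$ by Lemma \ref{lem5.1}; we conclude $\int\mathbf{P}_s\varphi\,d\xi=\int\varphi\,d\xi$ for all $s\geq0$, i.e.\ that $\xi$ is invariant in the sense of the definition preceding \eqref{5.1*}.

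The main obstacle I anticipate is not any single estimate but the non-metrizability of $\mathbb{H}_w$, which prevents a direct appeal to classical Prokhorov/Krylov--Bogoliubov theory; one must lean on the Maslowski--Seidler machinery, exploiting the metrizability of the weak topology on the bounded sets $B_R$, to legitimately extract the convergent subsequence of averaged measures and to pass to the limit in the invariance identity. The role of Conditions \textbf{C.1}--\textbf{C.4} is entirely upstream: they are what make Lemma \ref{lem5.2} hold with a bound that is \emph{linear} in $t$, and this linear growth is indispensable, since it is exactly what yields the uniform-in-$T$ control $\mu_T(\mathbb{H}\setminus B_R)\leq (C+CT)/(R^2T)$ needed for the tightness above.
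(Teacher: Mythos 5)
Your proposal is correct and follows essentially the same route as the paper: the paper's proof consists precisely of combining Lemma \ref{lem5.1} (sequential weak Feller property) and Lemma \ref{lem5.2} (uniform averaged bound giving tightness of the Krylov--Bogoliubov averages $\mu_T$ on $\mathbb{H}_w$) and then invoking Proposition 3.1 of \cite{Mas}. The only difference is that you unpack the Maslowski--Seidler proposition's internal argument (weak compactness of balls via Banach--Alaoglu, metrizability of the weak topology on bounded sets of the separable space $\mathbb{H}$, and the standard invariance computation using $\mathbf{P}_s\varphi\in C_b(\mathbb{H}_w)$), which the paper cites as a black box.
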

\begin{proof} Combining the Lemmas \ref{lem5.1},\ref{lem5.2}, Theorem \ref{thm5.1} follows from the Maslowski and Seidler theory, Proposition 3.1 in \cite{Mas}.
\end{proof}
\begin{remark} We believe that the Theorem \ref{thm5.1} could be extended to the case of equation $\eqref{Equ1.1}_2$ also being forced by a multiplicative noise. For such a system, we given the well-posedness of solution in paper \cite{ZW} with extra boundary condition corresponding to the phenomenon of vesicle membranes. Here, to use the stochastic compactness argument, only minor modification is required, that is, $\phi\in C_t^{\tilde{\alpha}}(
H^{-1})$, $\mathbb{P}$~a.s. for $\tilde{\alpha}$ less than $\frac{1}{2}$ strictly.
\end{remark}

\section{Appendix}

At the beginning, we introduce lemma used frequently for controlling the nonlinear term.
\begin{lemma}\!\!\!\cite[Lemma 3]{simon}\label{lem6.1} Let $r^*=\left\{\begin{array}{ll}
\!\!\frac{dr}{d-r}, {\rm if } ~r<d,\\
\!\!  {\rm any ~finite~nonnegetive~ real~ number,~if}~r=d,\\
\!\! \infty, {\rm if } ~r>d,
\end{array}\right.$ where $d=2,3$ is the dimension.  For $1\leq r\leq \infty, 1\leq s\leq \infty$, if $\frac{1}{r}+\frac{1}{s}\leq 1$, and $\frac{1}{r^*}+\frac{1}{s}=\frac{1}{t}$, $f\in H^{1,r}$ and $g\in H^{-1,s}$, then $fg\in H^{-1,t}$, that is,
$$\|fg\|_{H^{-1,t}}\leq \|f\|_{H^{1,r}}\|g\|_{H^{-1,s}}.$$
\end{lemma}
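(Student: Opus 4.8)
The plan is to prove the estimate by duality, turning the bound on the product $fg$ in the negative-order space $H^{-1,t}$ into a multiplication estimate for $f\varphi$ in a positive-order space tested against $\varphi$. Recall that $H^{-1,t}$ is realized as the dual of $W^{1,t'}$ with $\frac{1}{t}+\frac{1}{t'}=1$, so that
$$\|fg\|_{H^{-1,t}}=\sup\left\{|\langle fg,\varphi\rangle|:\ \varphi\in C_c^\infty(\mathcal{D}),\ \|\varphi\|_{W^{1,t'}}\le 1\right\}.$$
First I would rewrite the pairing as $\langle fg,\varphi\rangle=\langle g,f\varphi\rangle$ and use that $g\in H^{-1,s}=(W^{1,s'})^*$ with $\frac{1}{s}+\frac{1}{s'}=1$, which yields at once
$$|\langle g,f\varphi\rangle|\le \|g\|_{H^{-1,s}}\,\|f\varphi\|_{W^{1,s'}}.$$
In this way the whole problem is reduced to the algebraic multiplication estimate $\|f\varphi\|_{W^{1,s'}}\le C\|f\|_{H^{1,r}}\|\varphi\|_{W^{1,t'}}$.

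Next I would expand the $W^{1,s'}$ norm through the Leibniz rule $\nabla(f\varphi)=\varphi\nabla f+f\nabla\varphi$ and estimate the resulting terms by H\"older's inequality, feeding in the Sobolev embeddings $W^{1,r}\hookrightarrow L^{r^*}$ and $W^{1,t'}\hookrightarrow L^{(t')^*}$. The crucial observation is that the exponent hypotheses are tailored so that every H\"older split closes. For the term $\varphi\nabla f$ one checks $\frac{1}{s'}=\frac{1}{r}+\frac{1}{(t')^*}$, which holds because $\frac{1}{(t')^*}=\frac{1}{t'}-\frac{1}{d}=1-\frac{1}{r}-\frac{1}{s}$ follows from $\frac{1}{r^*}+\frac{1}{s}=\frac{1}{t}$; hence $\|\varphi\nabla f\|_{L^{s'}}\le \|\nabla f\|_{L^r}\|\varphi\|_{L^{(t')^*}}\le C\|f\|_{H^{1,r}}\|\varphi\|_{W^{1,t'}}$. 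For the term $f\nabla\varphi$ one checks $\frac{1}{s'}=\frac{1}{r^*}+\frac{1}{t'}$ (equivalently $\frac{1}{r^*}=\frac{1}{t}-\frac{1}{s}$), so $\|f\nabla\varphi\|_{L^{s'}}\le \|f\|_{L^{r^*}}\|\nabla\varphi\|_{L^{t'}}\le C\|f\|_{H^{1,r}}\|\varphi\|_{W^{1,t'}}$. The lowest-order term $\|f\varphi\|_{L^{s'}}$ is handled the same way, using $f\in L^{r^*}$ and $\varphi\in L^{(t')^*}$ together with the embedding of $L^c$ into $L^{s'}$ on the bounded domain $\mathcal{D}$, since $\frac{1}{r^*}+\frac{1}{(t')^*}=\frac{1}{s'}-\frac{1}{d}\le \frac{1}{s'}$. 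Collecting these three bounds gives the multiplication estimate, and taking the supremum over $\varphi$ delivers the claim.

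Finally I would dispose of the non-generic exponent ranges. The computation above is written for $r<d$; the cases $r=d$ (where $r^*$ may be taken to be any finite exponent) and $r>d$ (where $W^{1,r}\hookrightarrow L^\infty$, so $r^*=\infty$) are treated by the identical duality scheme, replacing the embedding $W^{1,r}\hookrightarrow L^{r^*}$ by its appropriate endpoint form and adjusting the H\"older exponents accordingly. A routine density argument, approximating $f$ and $\varphi$ by smooth functions, justifies both the Leibniz rule and the well-definedness of the pairing $\langle g,f\varphi\rangle$ with $f\varphi\in W^{1,s'}$.

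The main obstacle, and the only genuinely delicate bookkeeping, is verifying that the three H\"older splittings are simultaneously admissible; this is precisely where the two structural hypotheses enter, namely $\frac{1}{r}+\frac{1}{s}\le 1$, which guarantees the nonnegativity of all the conjugate exponents produced by the splits, and $\frac{1}{r^*}+\frac{1}{s}=\frac{1}{t}$, which pins down the output exponent $t$ and makes the Sobolev-embedding exponents match exactly. Once these relations are recorded, the remaining estimates are standard applications of H\"older's inequality and Sobolev embedding.
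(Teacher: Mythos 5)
The paper offers no proof of this lemma: it is quoted from Simon \cite[Lemma 3]{simon}, so the only meaningful comparison is with the standard (indeed Simon's own) argument, which is precisely the duality scheme you propose: define $\langle fg,\varphi\rangle=\langle g,f\varphi\rangle$ and reduce everything to the bilinear estimate $\|f\varphi\|_{W^{1,s'}}\leq C\|f\|_{H^{1,r}}\|\varphi\|_{W^{1,t'}}$. Your exponent bookkeeping is correct: $\frac{1}{(t')^*}=\frac{1}{t'}-\frac{1}{d}=1-\frac1r-\frac1s$ follows from $\frac{1}{r^*}+\frac1s=\frac1t$, the splits $\frac{1}{s'}=\frac1r+\frac{1}{(t')^*}$ and $\frac{1}{s'}=\frac{1}{r^*}+\frac{1}{t'}$ both check out exactly, and the zeroth-order term closes on the bounded domain since $\frac{1}{r^*}+\frac{1}{(t')^*}=\frac{1}{s'}-\frac1d$. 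The hypothesis $\frac1r+\frac1s\le1$ is indeed what makes $\frac{1}{(t')^*}\ge0$, as you note, and since $\varphi\in C_c^\infty$ the product $f\varphi$ lies in the zero-trace space, so the pairing with $g\in H^{-1,s}=(W^{1,s'}_0)'$ is legitimate. Your proof yields a constant $C(\mathcal{D},r,s)$ from the Sobolev embeddings rather than the constant $1$ in the transcribed statement; this is harmless, as the paper only uses the estimate with unspecified constants.

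One caveat deserves to be named concretely, because your closing remark about endpoint cases covers only $r=d$ and $r>d$: the borderline $\frac1r+\frac1s=1$ \emph{with} $r<d$ is admitted by the hypotheses but is not reached by your argument. There $\frac{1}{(t')^*}=0$ and $t'=d$, so the term $\varphi\nabla f$ requires $\varphi\in L^\infty$ while the critical embedding $W^{1,d}\hookrightarrow L^\infty$ fails for $d\ge2$ (for instance $d=3$, $r=s=2$, $t=\frac32$, $t'=3$); no rearrangement of the H\"older splits repairs this, since $\nabla f$ is only in $L^r$. This is a genuine limitation of the duality scheme at that single endpoint, not a slip in your subcritical computation. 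It does not affect the paper: the sole invocation of the lemma takes $d=2$, $r=s=2$, $t=\frac32$, i.e.\ $r=d$ with $r^*=6$ and $t'=3>d$, where $W^{1,3}\hookrightarrow L^\infty$ holds in two dimensions and your proof closes verbatim.
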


To establish the compactness property of a family of solutions, the following Aubin-Lions lemma and Skorokhod-Jakubowsk  representation theorem are commonly used. For more background we recommend Corollary 5 of \cite{Simon} and Theorem 2.4 of \cite{Zabczyk}. The Vitali convergence theorem is applied to identify the limit (see for example, Chapter 3 of \cite{Kallenberg}).

\begin{lemma}\!\!\!(The Aubin-Lions Lemma)\label{lem4.5} Suppose that $X_{1}\subset X_{0}\subset X_{2}$ are Banach spaces and $X_{1}$ and $X_{2}$ are reflexive and the embedding of $X_{1}$ into $X_{0}$ is compact.
Then for any $1\leq p<\infty,~ 0<\alpha<1$, the embedding
\begin{equation*}
L^{p}_tX_{1}\cap C^{\alpha}_tX_{2} \hookrightarrow L^{p}_tX_{0},~ L^{\infty}_tX_{1}\cap C^{\alpha}_tX_{2}\hookrightarrow C_t((X_{1})_w)
\end{equation*}
and 
$$L^{p}_tX_{1}\cap W_t^{\alpha, r}X_2\hookrightarrow L^{p}_tX_{0},$$
where $\alpha>0$ if $r\geq p$, is compact, respectively.
\end{lemma}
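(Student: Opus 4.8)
The plan is to follow the strategy of Simon \cite{Simon}, whose backbone is an interpolation inequality of Ehrling--Lions type. First I would record the elementary fact that, since $X_1\hookrightarrow X_0$ is compact and $X_0\hookrightarrow X_2$ continuously, for every $\eta>0$ there exists a constant $C_\eta$ such that
$$\|v\|_{X_0}\leq \eta\|v\|_{X_1}+C_\eta\|v\|_{X_2},\quad \forall v\in X_1.$$
This is proved by contradiction: a normalized sequence $\|v_n\|_{X_1}=1$ violating the bound has $\|v_n\|_{X_2}\to 0$, and the compact embedding yields a subsequence converging in $X_0$ to some $v$ with $\|v\|_{X_0}\geq \eta>0$; but the injectivity of $X_0\hookrightarrow X_2$ forces $v=0$, a contradiction. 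Integrating this inequality in time transfers it to the Bochner spaces, which is the device that reduces every compactness claim to a statement in the weaker space $X_2$.

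For the first embedding, take a bounded sequence $\{u_n\}$ in $L^p_tX_1\cap C^\alpha_tX_2$. Applying the integrated interpolation inequality to $u_n-u_m$ gives
$$\|u_n-u_m\|_{L^p_tX_0}\leq 2M\eta+C_\eta\|u_n-u_m\|_{L^p_tX_2},$$
where $M$ bounds $\|u_n\|_{L^p_tX_1}$; choosing $\eta$ small reduces the problem to extracting a subsequence that is Cauchy in $L^p_tX_2$, for which I would prove relative compactness in $C_tX_2$ via Arzel\`a--Ascoli. Equicontinuity in time is immediate from the uniform $C^\alpha_tX_2$ bound. The pointwise relative compactness of $\{u_n(t)\}$ in $X_2$ is the crux: for fixed $t$ the averages $h^{-1}\int_t^{t+h}u_n(s)\,ds$ are bounded in $X_1$ (by H\"older, using the $L^p_tX_1$ bound), hence relatively compact in $X_0\hookrightarrow X_2$, while $\|u_n(t)-h^{-1}\int_t^{t+h}u_n(s)\,ds\|_{X_2}\leq [u_n]_{C^\alpha_tX_2}\,h^\alpha$ is uniformly small; total boundedness in $X_2$ follows.

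For the second embedding the target carries the weak topology of $X_1$, so I would run a weak-topology version of Arzel\`a--Ascoli. The uniform $L^\infty_tX_1$ bound together with reflexivity of $X_1$ gives, for each $t$, that $\{u_n(t)\}$ lies in a weakly sequentially compact ball; the $C^\alpha_tX_2$ bound supplies equicontinuity when tested against any fixed functional from the space $X_2'$, which is dense in $X_1'$, and a diagonal argument over a countable dense set of test functionals produces a limit in $C_t((X_1)_w)$. One must also check that the $L^\infty_tX_1$ bound plus $X_2$-continuity upgrades each $u_n$ (and the limit) to a genuinely weakly continuous $X_1$-valued map, which again uses reflexivity.

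For the third embedding I would invoke the Riesz--Fr\'echet--Kolmogorov criterion for relative compactness in $L^p_tX_0$ (Corollary 5 of \cite{Simon}): it suffices that the time-integrals $\int_{t_1}^{t_2}u_n\,dt$ be relatively compact in $X_0$ and that the translations $\|u_n(\cdot+\tau)-u_n\|_{L^p_tX_0}$ tend to $0$ uniformly in $n$ as $\tau\to 0$. The first condition follows because the integral is bounded in $X_1$ (from the $L^p_tX_1$ bound) and $X_1\hookrightarrow X_0$ is compact. For the second I apply the integrated interpolation inequality to $u_n(\cdot+\tau)-u_n$, bounding the $X_1$-part by $2M\eta$ and controlling the $X_2$-part by the Gagliardo seminorm, $\|u_n(\cdot+\tau)-u_n\|_{L^r_tX_2}\leq C\tau^\alpha\|u_n\|_{W^{\alpha,r}_tX_2}$, which passes to $L^p$ on the finite interval precisely under the hypothesis $\alpha>0$ with $r\geq p$. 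I expect the main obstacle throughout to be the pointwise-in-time compactness in the weaker space extracted from a mere $L^p$ (or $L^\infty$) bound in the stronger space --- handled by the averaging trick in the first case and by reflexivity together with the weak-topology Arzel\`a--Ascoli in the second --- since the time integrability alone does not control $u_n(t)$ at individual times.
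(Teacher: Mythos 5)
Your proposal is correct and matches the paper's route exactly: the paper does not prove this lemma itself but cites Corollary 5 of \cite{Simon} (together with the standard weak-continuity variant), and your argument --- the Ehrling-type interpolation inequality, the time-averaging trick plus Arzel\`a--Ascoli for the first embedding, a weak-topology Arzel\`a--Ascoli for the second, and Simon's translation/Riesz--Fr\'echet--Kolmogorov criterion with the Besov-type estimate $\|u(\cdot+\tau)-u\|_{L^r_tX_2}\leq C\tau^\alpha\|u\|_{W^{\alpha,r}_tX_2}$ for the third --- is precisely the proof behind that citation. The only implicit point worth flagging is that your diagonal argument over a countable dense set of functionals presupposes separability of $X_1'$, which is harmless here: one first restricts to the closed linear span of the sequence's values, a separable reflexive subspace whose dual is then separable, and in the paper's applications all spaces are separable anyway.
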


\begin{theorem}\!\!\!(The Vitali convergence theorem)\label{thm4.1} Let $p\geq 1$, $\{X_n\}_{n\geq 1}\in L^p$ and $X_n\rightarrow X$ in probability. Then, the following are equivalent\\
{\rm (1)}. $X_n\rightarrow X$ in $L^p$;\\
{\rm (2)}. the sequence $|X_n|^p$ is uniformly integrable;\\
{\rm (3)}. $\mathbb{E}|X_n|^p\rightarrow \mathbb{E}|X|^p$.
\end{theorem}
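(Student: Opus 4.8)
The plan is to establish the three-way equivalence by closing a chain of implications, using throughout that $(\Omega,\mathcal{F},\mathbb{P})$ is a probability (hence finite) measure space, so that convergence in probability passes to almost sure convergence along a subsequence and Fatou's lemma applies. The single elementary tool underlying every step is the convexity inequality $|a-b|^p\leq 2^{p-1}(|a|^p+|b|^p)$. Concretely I would prove $(1)\Rightarrow(2)$, the core implication $(2)\Rightarrow(1)$, and the pair $(1)\Leftrightarrow(3)$; together these yield all equivalences. For $(1)\Rightarrow(2)$: writing $|X_n|^p\leq 2^{p-1}(|X_n-X|^p+|X|^p)$, the single function $|X|^p\in L^1$ is uniformly integrable, the sequence $|X_n-X|^p$ converges to $0$ in $L^1$ and so is uniformly integrable, and a dominated sum of two uniformly integrable families is uniformly integrable.

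The core step is $(2)\Rightarrow(1)$. First I would verify $X\in L^p$: passing to an almost surely convergent subsequence and combining Fatou with the uniform bound $\sup_n\mathbb{E}|X_n|^p<\infty$ supplied by uniform integrability gives $\mathbb{E}|X|^p<\infty$. Setting $Y_n=|X_n-X|^p$, the convexity inequality shows $\{Y_n\}$ is uniformly integrable and $Y_n\to0$ in probability. To deduce $\mathbb{E}Y_n\to0$ I would truncate: for fixed $M$, estimate $\mathbb{E}Y_n\leq\mathbb{E}[Y_n\wedge M]+\mathbb{E}[Y_n\mathbf{1}_{\{Y_n>M\}}]$, control the second term uniformly in $n$ via uniform integrability, and send $n\to\infty$ in the first using that $Y_n\wedge M\to0$ in probability while staying bounded by $M$. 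Letting $M\to\infty$ then forces $\limsup_n\mathbb{E}Y_n\leq0$.

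It remains to handle statement $(3)$. The implication $(1)\Rightarrow(3)$ is immediate from the reverse triangle inequality $\big|\,\|X_n\|_{L^p}-\|X\|_{L^p}\,\big|\leq\|X_n-X\|_{L^p}$, which gives norm convergence and hence $\mathbb{E}|X_n|^p\to\mathbb{E}|X|^p$. For $(3)\Rightarrow(1)$ I would use the Scheffé-type cancellation: set $g_n=2^{p-1}(|X_n|^p+|X|^p)-|X_n-X|^p\geq0$, note $g_n\to 2^p|X|^p$ in probability, and apply Fatou to get $2^p\mathbb{E}|X|^p\leq\liminf_n\mathbb{E}g_n=2^p\mathbb{E}|X|^p-\limsup_n\mathbb{E}|X_n-X|^p$, where the last equality uses the hypothesis $\mathbb{E}|X_n|^p\to\mathbb{E}|X|^p$. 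This yields $\limsup_n\mathbb{E}|X_n-X|^p\leq0$, i.e. $(1)$.

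The main obstacle lies in the two upgrading directions $(2)\Rightarrow(1)$ and $(3)\Rightarrow(1)$. Convergence in probability alone exerts no control over the integrals $\mathbb{E}|X_n-X|^p$, so the entire difficulty is in ruling out escape of mass to infinity; this is achieved either by the truncation argument fed by uniform integrability, or by the exact Fatou cancellation fed by the norm hypothesis. The remaining implications reduce to the convexity inequality and the triangle inequality and are routine.
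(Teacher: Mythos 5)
The paper itself contains no proof of this statement: Theorem \ref{thm4.1} is quoted in the Appendix as a standard tool, with a pointer to Chapter 3 of Kallenberg's \emph{Foundations of Modern Probability}, so there is no in-paper argument to compare yours against. On its own merits, your proof is the standard textbook argument and is correct: (1)$\Rightarrow$(2) via $|X_n|^p\leq 2^{p-1}\bigl(|X_n-X|^p+|X|^p\bigr)$ and stability of uniform integrability under domination and sums; (2)$\Rightarrow$(1) via the truncation $\mathbb{E}Y_n\leq\mathbb{E}[Y_n\wedge M]+\mathbb{E}\bigl[Y_n\mathbf{1}_{\{Y_n>M\}}\bigr]$ combined with bounded convergence in probability for the first term and uniform integrability for the second; (1)$\Rightarrow$(3) by the reverse triangle inequality; (3)$\Rightarrow$(1) by the Scheff\'e--Riesz cancellation with Fatou applied along a.s.\ convergent subsequences; and the chain you establish does close the full three-way equivalence. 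One caveat is worth making explicit: the paper's hypothesis only puts $X_n\in L^p$, and in your step (3)$\Rightarrow$(1) the rearrangement $\liminf_n\mathbb{E}g_n=2^p\mathbb{E}|X|^p-\limsup_n\mathbb{E}|X_n-X|^p$ is legitimate only when $\mathbb{E}|X|^p<\infty$; in the degenerate reading where $\mathbb{E}|X_n|^p\to\infty=\mathbb{E}|X|^p$, condition (3) holds vacuously while (1) fails, so the theorem as literally stated is false without $X\in L^p$. Kallenberg's formulation includes $X\in L^p$ among the hypotheses, and you should either add that assumption globally or observe, as you in effect do inside (2)$\Rightarrow$(1) via Fatou and the $L^1$-boundedness furnished by uniform integrability, that conditions (1) and (2) each force it.
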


\begin{theorem}\!\!\!(The Skorokhod-Jakubowsk representative theorem)\label{thm4.2} Let $X$ be a topological space. If the set of probability measures $\{\nu_n\}_{n\geq 1}$ on $\mathcal{B}(X)$ is tight, then there exists a probability space $(\Omega, \mathcal{F}, \mathbb{P})$ and a sequence of random variables $u_n, u$ such that theirs laws are $\nu_n$, $\nu$ and $u_n\rightarrow u$, $\mathbb{P}$-a.s. as $n\rightarrow \infty$.
\end{theorem}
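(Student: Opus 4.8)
The plan is to reduce this nonmetric statement to the classical Skorokhod representation theorem on a Polish space, exploiting the feature that makes $X$ a quasi-Polish space: the existence of a countable family $\{f_m\}_{m\geq 1}$ of continuous real-valued functions on $X$ that separate its points and generate its topology. First I would bound each function by setting $g_m=f_m/(1+|f_m|)$ and assemble the map $\mathbf{g}=(g_m)_{m\geq1}:X\to M$, where $M:=[-1,1]^{\mathbb{N}}$ is a compact metrizable, hence Polish, space. Because the $f_m$ separate points so do the $g_m$, whence $\mathbf{g}$ is injective; because each $g_m$ is continuous and the topology of $X$ is exactly the initial topology generated by the $f_m$, the map $\mathbf{g}$ is in fact a homeomorphism of $X$ onto its image $\mathbf{g}(X)\subset M$ equipped with the subspace topology.

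Next I would transport the measures to $M$. Writing $\tilde{\nu}_n:=\nu_n\circ\mathbf{g}^{-1}$ for the pushforward laws, compactness of $M$ makes $\{\tilde{\nu}_n\}$ automatically tight there, so Prokhorov's theorem extracts a subsequence $\tilde{\nu}_{n_k}$ converging weakly to some probability measure $\tilde{\nu}$ on $M$. The classical (Polish-space) Skorokhod representation theorem, applied on $M$, then provides a probability space $(\Omega,\mathcal{F},\mathbb{P})$ together with $M$-valued random variables $Y_k$ and $Y$, of laws $\tilde{\nu}_{n_k}$ and $\tilde{\nu}$ respectively, such that $Y_k\to Y$ in $M$, $\mathbb{P}$-almost surely.

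The crux, and the step I expect to be the main obstacle, is to pull this back to the non-metrizable space $X$, i.e. to define $\chi_k:=\mathbf{g}^{-1}(Y_k)$ and $\chi:=\mathbf{g}^{-1}(Y)$ and verify the almost sure convergence there. The difficulty is that $\mathbf{g}^{-1}$ is defined only on $\mathbf{g}(X)$, so one must show $Y_k,Y\in\mathbf{g}(X)$ almost surely. For $Y_k$ this is automatic, its law $\tilde{\nu}_{n_k}$ being carried by $\mathbf{g}(X)$; for the limit $Y$ one invokes the tightness of $\{\nu_n\}$ on $X$: given $\varepsilon>0$ there is a compact $K_\varepsilon\subset X$ with $\nu_n(K_\varepsilon)\geq 1-\varepsilon$ for all $n$, and since $\mathbf{g}(K_\varepsilon)$ is compact, hence closed, in $M$, the portmanteau inequality $\tilde{\nu}(\mathbf{g}(K_\varepsilon))\geq\limsup_k\tilde{\nu}_{n_k}(\mathbf{g}(K_\varepsilon))\geq 1-\varepsilon$ follows; letting $\varepsilon=1/j$ and taking the union shows $\tilde{\nu}$ is concentrated on the $\sigma$-compact set $S:=\bigcup_{j\geq1}\mathbf{g}(K_{1/j})\subset\mathbf{g}(X)$, which is Borel in $M$. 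Hence $Y\in\mathbf{g}(X)$ and every $Y_k\in\mathbf{g}(X)$ hold $\mathbb{P}$-almost surely, so $\chi_k,\chi$ are well-defined $X$-valued random variables; since $\mathbf{g}$ is a Borel isomorphism onto its image they carry the laws $\nu_{n_k}$ and $\nu:=\tilde{\nu}\circ\mathbf{g}$; and since $\mathbf{g}$ is a homeomorphism onto $\mathbf{g}(X)$, the almost sure convergence $Y_k\to Y$ in $M$ transfers to $\chi_k\to\chi$ in the topology of $X$. This yields the asserted representation, with the convergence understood along the extracted subsequence.
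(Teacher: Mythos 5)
Your overall skeleton---embed $X$ into the compact metrizable cube $M=[-1,1]^{\mathbb{N}}$ via countably many bounded continuous functions, push the laws forward, extract a weakly convergent subsequence by Prokhorov, apply the classical Skorokhod theorem on $M$, and pull back---is in fact the strategy of Jakubowski's original proof, which the paper does not reproduce but only cites. However, your execution has a genuine gap at the decisive step. You assume that the countable family $\{f_m\}$ \emph{generates the topology} of $X$, and deduce that $\mathbf{g}$ is a homeomorphism onto its image. In the quasi-Polish framework this paper needs, one only has that the $f_m$ \emph{separate points}; if the topology of $X$ were the initial topology of countably many real functions, $X$ would embed homeomorphically into $\mathbb{R}^{\mathbb{N}}$ and hence be separable metrizable, and the theorem would collapse to the classical metric-space statement. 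The paper applies the theorem to spaces such as $C([0,T];\mathbb{H}_w)$ and $(L_t^2\widetilde{\mathbb{H}})_w$, whose (weak) topologies are not metrizable and therefore cannot be generated by countably many continuous real functions. So your homeomorphism claim is false precisely in the cases of interest.

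Concretely, the failure occurs in your pull-back step. Without topology generation, $\mathbf{g}^{-1}$ is continuous only on $\mathbf{g}(K)$ for each compact $K\subset X$ (a continuous injection from a compact space into a Hausdorff space is a homeomorphism onto its image). Your tightness argument correctly shows $Y_k, Y\in S=\bigcup_j\mathbf{g}(K_{1/j})$ almost surely, but it does \emph{not} show that for a.e.\ $\omega$ the whole sequence $\{Y_k(\omega)\}_k$ is eventually confined to a \emph{single} $\mathbf{g}(K_{1/j})$, and without such confinement, convergence $Y_k\to Y$ in $M$ does not transfer to $\chi_k\to\chi$ in $X$. A concrete obstruction: take $X$ an infinite-dimensional separable Hilbert space with its weak topology and $f_m$ the coordinate functionals; a sequence $x_k$ with $\|x_k\|\to\infty$ can converge coordinatewise (i.e.\ $\mathbf{g}(x_k)\to\mathbf{g}(x)$ in $M$) while failing to converge weakly, since weak convergence forces boundedness by the Banach--Steinhaus theorem. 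This is exactly why Jakubowski's proof does not stop at a bare application of the classical Skorokhod theorem on $M$: it builds the representation (essentially on $([0,1],\mathrm{Leb})$, working with the restrictions of the $\nu_n$ to the compacts $K_j$ and a diagonal argument) so as to secure, in addition to a.s.\ convergence in $M$, the almost sure trapping of the sequence in one compact shell $\mathbf{g}(K_j)$, after which the restricted homeomorphisms $\mathbf{g}|_{K_j}$ yield convergence in $X$. Supplying that confinement construction is the missing core of the proof; the rest of your argument (injectivity, tightness transfer via portmanteau, Borel isomorphism on $\sigma$-compacta, and the correct observation that the conclusion only holds along a subsequence) is sound.
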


To pass the limit in stochastic integral of solution sequence, we take the following result from \cite[Lemma 2.1]{ANR}.
\begin{lemma}\label{lem4.6} Assume that $G_\varepsilon$ is a sequence of $X$-valued $\mathcal{F}_{t}^\varepsilon$ predictable processes such that
$$G_\varepsilon\rightarrow G,~ {\rm in~ probability~ in}~L_t^2L_2(\mathcal{H};X),$$
and the cylindrical Wiener process sequence $\mathcal{W}_\varepsilon$ satisfies
$$\mathcal{W}_\varepsilon\rightarrow \mathcal{W},~ {\rm in ~probability~ in}~C_t\mathcal{H}_0,$$
then,
$$\int_{0}^{t}G_\varepsilon d\mathcal{W}_\varepsilon\rightarrow \int_{0}^{t}G d\mathcal{W}, ~ {\rm in ~probability~ in}~ L_t^2X.$$
\end{lemma}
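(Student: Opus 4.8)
The plan is to isolate the two mechanisms at work, namely the change of integrand against a fixed noise and the change of noise against a fixed (simple) integrand, and to treat each by a truncation argument that circumvents the absence of an $L^2_\omega$ bound. I would first write
$$\int_{0}^{t} G_\varepsilon\,d\mathcal{W}_\varepsilon-\int_{0}^{t} G\,d\mathcal{W}=\int_{0}^{t}(G_\varepsilon-G)\,d\mathcal{W}_\varepsilon+\left(\int_{0}^{t} G\,d\mathcal{W}_\varepsilon-\int_{0}^{t} G\,d\mathcal{W}\right)=:A_\varepsilon+B_\varepsilon,$$
where both integrals against $\mathcal{W}_\varepsilon$ are legitimate because, in the construction underlying the statement, $\mathcal{W}_\varepsilon$ is a cylindrical Wiener process with respect to a filtration making $G_\varepsilon$ and $G$ predictable.

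The obstruction for $A_\varepsilon$ is that we only have $G_\varepsilon\to G$ in probability in $L^2_tL_2(\mathcal{H};X)$, not in $L^2_\omega$, so the It\^o isometry is unavailable. I would record the standard truncation inequality: for any predictable $\Psi$ and any $\delta,\eta>0$,
$$\mathbb{P}\!\left(\sup_{t\in[0,T]}\Big\|\int_{0}^{t}\Psi\,d\mathcal{W}_\varepsilon\Big\|_{X}>\eta\right)\le \frac{c\,\delta}{\eta^{2}}+\mathbb{P}\!\left(\int_{0}^{T}\|\Psi\|_{L_2(\mathcal{H};X)}^{2}\,dt>\delta\right),$$
proved by stopping at $\tau=\inf\{t:\int_{0}^{t}\|\Psi\|^{2}\,ds>\delta\}$ and applying the Burkholder--Davis--Gundy inequality to the stopped integral, with constant $c$ independent of $\varepsilon$. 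Applying this with $\Psi=G_\varepsilon-G$, then sending $\varepsilon\to0$ so that the last probability vanishes for each fixed $\delta$, and finally $\delta\to0$, yields $A_\varepsilon\to0$ in probability in $C_tX$, hence in $L^2_tX$.

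For $B_\varepsilon$ the integrand is fixed but the noise changes, and here I would reduce to simple integrands. Approximate $G$ by step processes $G^{(m)}$, piecewise constant in time with finite-rank values $G^{(m)}_{t_j}=\sum_{k\le N}g_{jk}\otimes e_k$, so that $G^{(m)}\to G$ in $L^2_tL_2(\mathcal{H};X)$ in probability as $m\to\infty$. For such $G^{(m)}$ the integral is the finite sum $\sum_{j}\sum_{k\le N}g_{jk}\,(\beta^\varepsilon_k(t_{j+1})-\beta^\varepsilon_k(t_j))$ with $\beta^\varepsilon_k=\langle\mathcal{W}_\varepsilon,e_k\rangle$; since $\mathcal{W}_\varepsilon\to\mathcal{W}$ in $C_t\mathcal{H}_0$ forces each coordinate $\beta^\varepsilon_k\to\beta_k$ in $C_t\mathbb{R}$ (because $|\beta^\varepsilon_k-\beta_k|\le k\,\|\mathcal{W}_\varepsilon-\mathcal{W}\|_{\mathcal{H}_0}$), for fixed $m$ we get $\int G^{(m)}\,d\mathcal{W}_\varepsilon\to\int G^{(m)}\,d\mathcal{W}$. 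Then I would split
$$B_\varepsilon=\Big(\int_{0}^{t}(G-G^{(m)})\,d\mathcal{W}_\varepsilon\Big)+\Big(\int_{0}^{t}G^{(m)}\,d\mathcal{W}_\varepsilon-\int_{0}^{t}G^{(m)}\,d\mathcal{W}\Big)+\Big(\int_{0}^{t}(G^{(m)}-G)\,d\mathcal{W}\Big),$$
bound the first and third terms uniformly in $\varepsilon$ by the truncation inequality above applied to $\Psi=G-G^{(m)}$, and kill the middle term by the coordinatewise convergence just described, choosing $m$ large first and then $\varepsilon$ small, to obtain $B_\varepsilon\to0$ in probability.

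Combining $A_\varepsilon\to0$ and $B_\varepsilon\to0$ gives the claim. The hard part is the interplay between the two weak modes of convergence: because the integrands converge only in probability the It\^o isometry is unavailable, which forces the stopping-time truncation estimate and the reduction to simple processes; one must then arrange the order of limits (first $m\to\infty$ uniformly in $\varepsilon$ via the truncation bound, then $\varepsilon\to0$ for the simple integral) so that the changing driving noise $\mathcal{W}_\varepsilon$ is only ever paired with a fixed finite-rank integrand, where the integral is a genuinely continuous functional of the Brownian path in $C_t\mathcal{H}_0$.
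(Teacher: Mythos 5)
Your overall toolkit is the right one (the Lenglart-type truncation inequality, reduction to simple integrands, coordinatewise convergence of $\beta_k^\varepsilon=\langle\mathcal{W}_\varepsilon,e_k\rangle$ via $|\beta^\varepsilon_k-\beta_k|\le k\|\mathcal{W}_\varepsilon-\mathcal{W}\|_{\mathcal{H}_0}$), but there is a genuine gap at the very first step: the decomposition into $A_\varepsilon+B_\varepsilon$ is not legitimate, because $\int_0^t G\,d\mathcal{W}_\varepsilon$ is not a well-defined It\^o integral under the stated hypotheses. The lemma only assumes $G_\varepsilon$ is $\mathcal{F}^\varepsilon_t$-predictable; $G$ is predictable for the limit filtration, and nothing makes it adapted to $\mathcal{F}^\varepsilon_t$. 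You cannot fix this by enlarging $\mathcal{F}^\varepsilon_t$ to contain $\sigma(G)$: with respect to such an enlarged filtration $\mathcal{W}_\varepsilon$ need not retain independent increments (nor even the martingale property), since $G$ is built from limit data that may be correlated with future increments of $\mathcal{W}_\varepsilon$. This is exactly the situation in the paper's application (Step 2 of Proposition \ref{pro4.1}), where $G=h(\tilde u,\nabla\tilde\phi)$ is adapted to the filtration generated by the limit processes, not to that of $\widetilde{\mathcal{W}}_{l_k}$. The same defect propagates into your treatment of $B_\varepsilon$: the truncation inequality applied to $\Psi=G-G^{(m)}$ against $d\mathcal{W}_\varepsilon$ rests on a stopping-time/BDG argument that requires $\Psi$ to be predictable with respect to a filtration making $\mathcal{W}_\varepsilon$ a Wiener process, which is unavailable. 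Your closing heuristic that the moving noise should ``only ever be paired with a fixed finite-rank integrand'' is thus self-defeating: a fixed integrand adapted to the \emph{limit} filtration is precisely the object one must never integrate against $\mathcal{W}_\varepsilon$.

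For the record, the paper does not prove this lemma at all; it quotes it from \cite{ANR} (Lemma 2.1 there, following Bensoussan's argument), and that proof shows how to repair your scheme: one approximates the \emph{moving} integrand rather than the limit. Let $G^\delta_\varepsilon$ be a time-mollification (or averaged step approximation) of $G_\varepsilon$ itself, hence still $\mathcal{F}^\varepsilon_t$-adapted, and decompose
\begin{align*}
\int_0^t G_\varepsilon\,d\mathcal{W}_\varepsilon-\int_0^t G\,d\mathcal{W}
=\int_0^t (G_\varepsilon-G^\delta_\varepsilon)\,d\mathcal{W}_\varepsilon
+\left(\int_0^t G^\delta_\varepsilon\,d\mathcal{W}_\varepsilon-\int_0^t G^\delta\,d\mathcal{W}\right)
+\int_0^t (G^\delta-G)\,d\mathcal{W}.
\end{align*}
The outer terms are handled by exactly your truncation inequality --- now with integrands adapted to the correct filtrations --- and the estimate $\|G^\delta_\varepsilon-G_\varepsilon\|_{L^2_tL_2(\mathcal{H};X)}\le 2\|G_\varepsilon-G\|_{L^2_tL_2(\mathcal{H};X)}+\|G^\delta-G\|_{L^2_tL_2(\mathcal{H};X)}$ gives the required smallness uniformly in $\varepsilon$ as $\delta\to 0$. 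For the middle term, the stochastic integral of a mollified (absolutely continuous in time, finite-rank projected) integrand can be rewritten by integration by parts as a jointly continuous functional of $(G_\varepsilon,\mathcal{W}_\varepsilon)\in (L^2_tL_2(\mathcal{H};X))\times C_t\mathcal{H}_0$, so the assumed joint convergence in probability passes to the limit with no adaptedness issue. Your coordinate computation and your truncation lemma then serve verbatim; the only, but essential, change is that the approximation must ride on $G_\varepsilon$, not on $G$.
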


\begin{lemma}\cite[Lemma 1.1]{gy}(The Gy\"{o}ngy-Krylov Lemma)\label{lem6.4}
Let $X$ be a complete separable metric space and suppose that $\{Y_{n}\}_{n\geq0}$ is a sequence of $X$-valued random variables on a probability space $(\Omega,\mathcal{F},\mathbb{P})$. Let $\{\mathcal{L}_{m,n}\}_{m,n\geq1}$ be the set of joint laws of $\{Y_{n}\}_{n\geq1}$, that is
\begin{equation*}
\mathcal{L}_{m,n}(E):=\mathbb{P}\{(Y_{n},Y_{m})\in E\},~~~E\in\mathcal{B}(X\times X).
\end{equation*}
Then $\{Y_{n}\}_{n\geq1}$ converges in probability if and only if for every subsequence of the joint probability laws $\{\mu_{m_{k},n_{k}}\}_{k\geq1}$, there exists a further subsequence that converges weakly to a probability measure $\mu$ such that
\begin{equation*}
\mathcal{L}\{(Y_1,Y_2)\in X\times X: Y_1=Y_2\}=1.\\
\end{equation*}
\end{lemma}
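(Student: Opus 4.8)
The plan is to prove the two implications separately. The forward implication (``only if'') is essentially immediate, so the substance of the lemma lies in the converse (``if''), which I would establish by contradiction, exploiting the fact that in a complete separable metric space a sequence converges in probability if and only if it is Cauchy in probability.

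For the forward direction, suppose $Y_n\to Y$ in probability for some $X$-valued random variable $Y$. Then along any index subsequence $(n_k,m_k)$ one has both $Y_{n_k}\to Y$ and $Y_{m_k}\to Y$ in probability, so the pair $(Y_{n_k},Y_{m_k})$ converges in probability, hence in distribution, in $X\times X$ to $(Y,Y)$. The limiting law $\mu:=\mathrm{Law}(Y,Y)$ is concentrated on the diagonal $\Delta:=\{(x,y)\in X\times X:x=y\}$, so $\mu(\Delta)=1$. Thus every subsequence of joint laws already converges weakly to a diagonal-supported measure and no further extraction is even needed.

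For the converse I would argue by contradiction: assume $\{Y_n\}$ does not converge in probability. Since $X$ is complete, this forces $\{Y_n\}$ to fail the Cauchy-in-probability condition, so there exist $\varepsilon>0$, $\delta>0$ and subsequences $\{n_k\},\{m_k\}$ with
\[
\mathbb{P}\{d(Y_{n_k},Y_{m_k})>\varepsilon\}\geq\delta\quad\text{for all }k.
\]
By hypothesis the associated subsequence of joint laws $\{\mathcal{L}_{m_k,n_k}\}$ admits a further subsequence (which I do not relabel) converging weakly to some $\mu$ with $\mu(\Delta)=1$. The set $F:=\{(x,y):d(x,y)\geq\varepsilon\}$ is closed in $X\times X$ and disjoint from $\Delta$, whence $\mu(F)=0$. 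The Portmanteau theorem in its closed-set form then gives $\limsup_k\mathcal{L}_{m_k,n_k}(F)\leq\mu(F)=0$, whereas by construction $\mathcal{L}_{m_k,n_k}(F)\geq\mathbb{P}\{d(Y_{n_k},Y_{m_k})>\varepsilon\}\geq\delta>0$, a contradiction. Consequently $\{Y_n\}$ is Cauchy in probability, and completeness of $X$ produces a limit $Y$ with $Y_n\to Y$ in probability.

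The step I expect to be the main obstacle is precisely this converse: the care lies in converting the failure of convergence in probability into a quantitative lower bound on a closed ``off-diagonal'' set and then pairing it with the upper semicontinuity of weak limits on closed sets, so that the diagonal concentration of $\mu$ forces the contradiction. A secondary technical point I would verify at the outset is that $X\times X$ inherits the complete separable metric structure of $X$ (so that the Portmanteau theorem and the weak-limit hypothesis are legitimately applicable); this is routine, since a finite product of Polish spaces is Polish.
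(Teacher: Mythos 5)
The paper offers no proof of this lemma at all: it is quoted from Gy\"{o}ngy--Krylov \cite[Lemma 1.1]{gy} and used as a black box, so there is no internal argument to compare against, and your proposal is correct and is essentially the standard (original) proof of that result. Specifically, your necessity step (convergence in probability of the pair $(Y_{n_k},Y_{m_k})$ to $(Y,Y)$, hence weak convergence to a law carried by the diagonal) and your sufficiency step (failure of the Cauchy-in-probability property puts mass at least $\delta$ on the closed off-diagonal set $\{(x,y): d(x,y)\ge \varepsilon\}$, which the Portmanteau upper bound forbids for any weak limit concentrated on the diagonal, after which completeness of $X$ upgrades Cauchy in probability to convergence in probability) are exactly the right mechanisms; the only cosmetic point is that the paper's displayed conclusion is misstated and should read $\mu\{(x,y)\in X\times X: x=y\}=1$ for the limit law $\mu$, which you interpreted correctly.
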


\begin{proof}[\underline{Proof of Step 3 of Theorem \ref{thm3.1}}]

Invoked by \cite{ANR, gy}, define by $\mathcal{L}_{n,m}$ the joint law
\begin{equation*}
(u_{n}, \phi_{n}, \mu_n;u_{m}, \phi_{m},\mu_m; \mathcal{W})~~~~ {\rm on~ the ~path ~space}~\mathcal{X}=\mathcal{X}_{u}\times \mathcal{X}_{\phi}\times \mathcal{X}_{\mu}\times \mathcal{X}_{u}\times \mathcal{X}_{\phi}\times\mathcal{X}_{\mu} \times \mathcal{X}_{\mathcal{W}},
\end{equation*}
where
$$\mathcal{X}_{u}:=C_t(\mathrm{H}^{-1})\cap L^2_t\mathrm{L}^2_x,~~ \mathcal{X}_{\phi}:=C_t(L_x^2)\cap L^2_tH^1,~ \mathcal{X}_{\mu}:= L^2_tL_x^2,~\mathcal{X}_{\mathcal{W}}:=C_t(\mathcal{H}_0),$$
and $\{u_{n}, \phi_{n};u_{m}, \phi_{m}\}_{n,m\geq 1}$ are two sequences of approximate solutions to system \eqref{E2.1}. Using the same argument as Lemma \ref{lem4.1}, we infer that,
$$ {\rm the~ collection ~of ~joint~ laws}~ \{\mathcal{L}_{m,n}\}_{n,m\geq 1}~{\rm is~ tight~ on} ~\mathcal{X}.$$
For any subsequence $\{\mathcal{L}_{n_{k},m_{k}}\}_{k\geq 1}$, there exists a measure $\mathcal{L}$ such that $\{\mathcal{L}_{n_{k},m_{k}}\}_{k\geq 1}$ converges weakly to $\mathcal{L}$. Applying the Skorokhod-Jakubowsk representation theorem, we have a new probability space $(\widehat{\Omega},\widehat{\mathcal{F}},\widehat{\mathbb{P}})$ and $\mathcal{X}$-valued random variables
\begin{eqnarray*}
(\hat{u}_{n_k}, \hat{\phi}_{n_k}, \hat{\mu}_{n_k};\hat{u}_{m_k}, \hat{\phi}_{m_k}, \hat{\mu}_{m_k}; \widehat{\mathcal{W}}_{k})~ {\rm and}~(\hat{u}_1, \hat{\phi}_{1}, \hat{\mu}_1;\hat{u}_2, \hat{\phi}_{2}, \hat{\mu}_2; \widehat{\mathcal{W}})
\end{eqnarray*}
such that
\begin{eqnarray*}
 &&\widehat{\mathbb{P}}\{(\hat{u}_{n_k}, \hat{\phi}_{n_k}, \hat{\mu}_{n_k};\hat{u}_{m_k}, \hat{\phi}_{m_k}, \hat{\mu}_{m_k}; \widehat{\mathcal{W}}_{k})\in \cdot\}=\mathcal{L}_{n_{k},m_{k}}(\cdot),\\
 &&\widehat{\mathbb{P}}\{(\hat{u}_1, \hat{\phi}_{1}, \hat{\mu}_1;\hat{u}_2, \hat{\phi}_{2}, \hat{\mu}_2; \widehat{\mathcal{W}})\in \cdot\}=\mathcal{L}(\cdot)
\end{eqnarray*}
 and
\begin{eqnarray*}
(\hat{u}_{n_k}, \hat{\phi}_{n_k}, \hat{\mu}_{n_k};\hat{u}_{m_k}, \hat{\phi}_{m_k}, \hat{\mu}_{m_k}; \widehat{\mathcal{W}}_{k})\rightarrow (\hat{u}_1, \hat{\phi}_{1}, \hat{\mu}_1;\hat{u}_2, \hat{\phi}_{2}, \hat{\mu}_2; \widehat{\mathcal{W}}),~~\widehat{\mathbb{P}}\mbox{-a.s.}
\end{eqnarray*}
in the topology of $\mathcal{X}$. Analogously, this argument can be applied to both
\begin{equation*}
(\hat{u}_{n_k}, \hat{\phi}_{n_k},\widehat{\mathcal{W}}_{k}),
~~(\hat{u}_1, \hat{\phi}_{1},\widehat{\mathcal{W}}) \hspace{.3cm} \text{and} \hspace{.3cm}
(\hat{u}_{m_k}, \hat{\phi}_{m_k}, \widehat{\mathcal{W}}_{k}),
~~(\hat{u}_2, \hat{\phi}_{2}, \widehat{\mathcal{W}})
\end{equation*}
to show that $(\hat{u}_1, \hat{\phi}_{1}, \widehat{\mathcal{W}})$ and $(\hat{u}_2, \hat{\phi}_{2}, \widehat{\mathcal{W}})$ are two martingale solutions relative to the same stochastic basis $\widehat{\mathcal{S}}:=(\widehat{\Omega},\widehat{\mathcal{F}},\widehat{\mathbb{P}},\{\widehat{\mathcal{F}}_{t}\}_{t\geq 0},\widehat{\mathcal{W}})$.

In addition, we have $\mathcal{L}_{n,m}\rightharpoonup \mathcal{L}$ where $\mathcal{L}$ is defined by
$$\mathcal{L}(\cdot)=\widehat{\mathbb{P}}\{(\hat{u}_1, \hat{\phi}_{1}, \hat{\mu}_1;\hat{u}_2, \hat{\phi}_{2}, \hat{\mu}_2)\in \cdot\}.$$
The uniqueness implies that $\mathcal{L}\{(u_1, \phi_{1}, \mu_1;u_2, \phi_{2}, \mu_2)\in \mathcal{X}:(u_1, \phi_{1}, \mu_1)=(u_2, \phi_{2}, \mu_2)\}=1$.

The both conditions of Lemma \ref{lem6.4} are verified, we deduce that the sequence $(u_n, \phi_{n}, \mu_n)$ defined on the original probability space $(\Omega,\mathcal{F},\mathbb{P})$ converges $\mathbb{P}$-a.s. in the topology of $\mathcal{X}_{u}\times \mathcal{X}_{\phi}\times\mathcal{X}_{\mu}$ to processes $(u, \phi, \mu)$.
\end{proof}

\smallskip

\section*{Acknowledgments}
 Z. Qiu's research is supported by the CSC under grant No.201806160015.

\bigskip

\end{document}